\documentclass[12pt]{amsart}

\usepackage{times,amsfonts,amsmath,amstext,amsbsy,amssymb,
  amsopn,amsthm,upref,eucal, amscd, graphicx}
\usepackage[pdftex]{color}
\usepackage[T1]{fontenc}

%
\newtheorem{theorem}{Theorem}[section]
\newtheorem{lemma}[theorem]{Lemma}

\newtheorem{proposition}[theorem]{Proposition}

\numberwithin{equation}{section}

\theoremstyle{definition}

\newtheorem{remark}[theorem]{Remark}

\begin{document}
\title[On stable sets of non-uniformly hyperbolic horseshoes]{An estimate on the Hausdorff dimension of stable sets of non-uniformly hyperbolic horseshoes}
\author{Carlos Matheus}
\address{Carlos Matheus: Universit\'e Paris 13, Sorbonne Paris Cit\'e, LAGA, CNRS (UMR 7539), F-93439, Villetaneuse, France}
\email{matheus@impa.br.}
\author{Jacob Palis}
\address{Jacob Palis: IMPA, Estrada D. Castorina, 110, CEP 22460-320, Rio de Janeiro, RJ, Brazil}
\email{jpalis@impa.br.}
\date{August 22, 2017}
\begin{abstract}
We show that the Hausdorff dimension of stable sets of non-uniformly hyperbolic horseshoes is strictly smaller than two.  
\end{abstract}
\maketitle

\tableofcontents

\section{Introduction}\label{intro}

We study in this article the geometry of stable and unstable sets of the 
\emph{non-uniformly hyperbolic horseshoes} 
introduced by Palis and Yoccoz in \cite{PY09} that appear very frequently in heteroclinic bifurcations associated to ``slightly thick'' horseshoes. 

More concretely, our main goal is to estimate the Hausdorff dimensions of the stable and unstable sets of non-uniformly hyperbolic horseshoes. 

Before stating our main result 
namely, Theorem \ref{t.MPY-A} below, 
we review some statements from 
\cite{PY09}.

\subsection{Heteroclinic bifurcations of slightly thick horseshoes}\label{ss.PYsetting-intro} Let $g_0:M\to M$ be a smooth ($C^{\infty}$) diffeomorphism of a compact surface $M$ displaying the following dynamical features. 

We suppose that $g_0$ possesses a horseshoe $K$ containing two periodic points $p_s$ and $p_u$ involved in a \emph{heteroclinic tangency}, that is, the points $p_s, p_u\in K$ belong to distinct periodic orbits and the invariant manifolds $W^s(p_s)$ and $W^u(p_u)$ meet \emph{tangentially} at some point $q\in M-K$. 

Also, we assume that this heteroclinic tangency is \emph{quadratic}, i.e., the curvatures of $W^s(p_s)$ and $W^u(p_u)$ at $q$ are distinct. 

Moreover, we suppose that the heteroclinic tangency is the sole responsible for the local dynamics of $g_0$ near $K$ and $q$, that is, 
there are neighborhoods $U$ of $K$ and $V$ of the orbit $\mathcal{O}(q)$ such that $K\cup\mathcal{O}(q)$ is the maximal invariant set of $U\cup V$.  

\begin{figure}[htb!]
\includegraphics[scale=0.6]{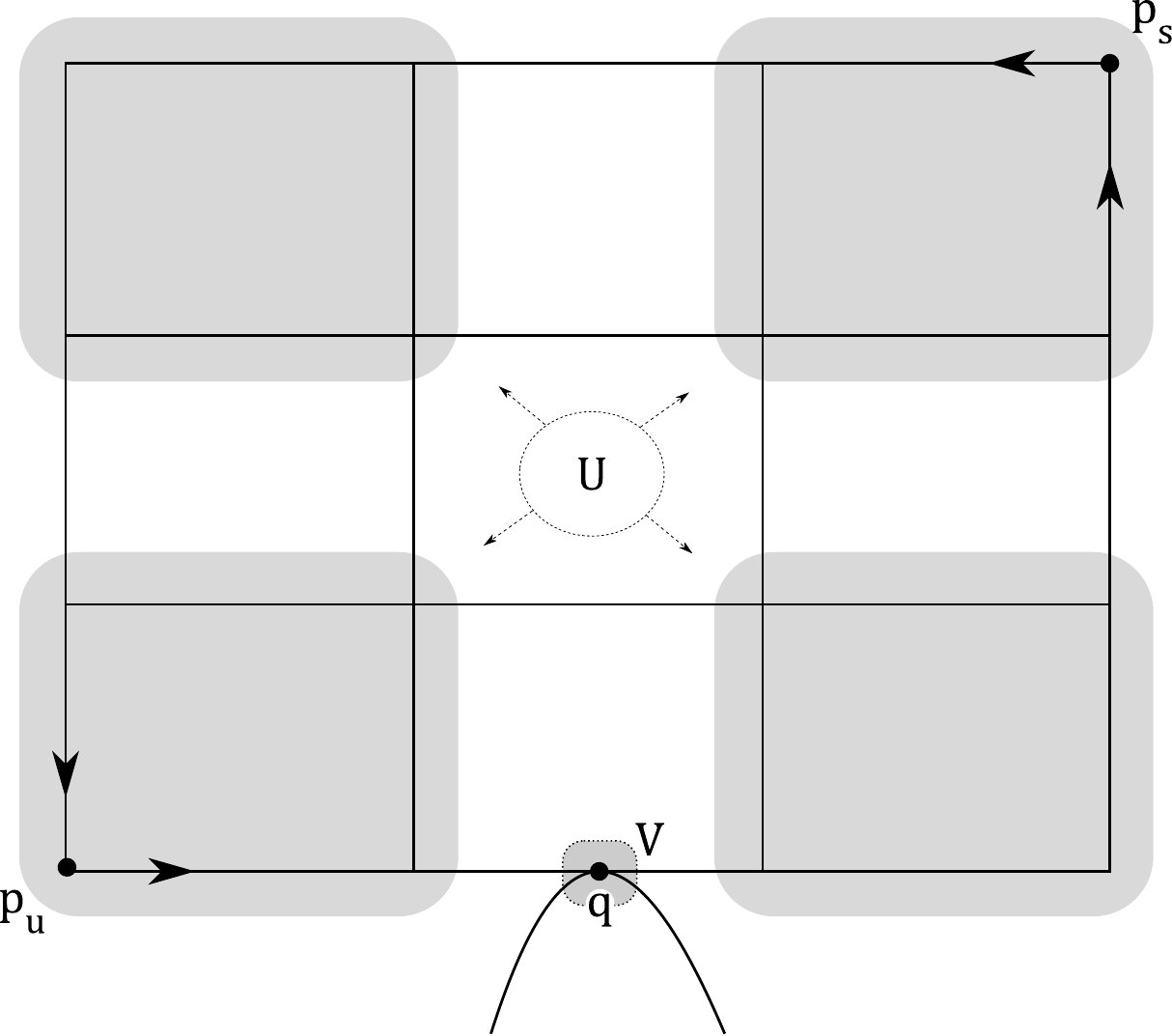}
\caption{Local dynamics near a heteroclinic tangency.}\label{f.heteroclinic-tangency}
\end{figure}

Consider $(g_t)_{|t|<t_0}$ a $1$-parameter family of smooth diffeomorphisms of $M$ \emph{generically}\footnote{
This means that the quadratic tangency between $W^s(p_s)$ and $W^u(p_u)$ move with positive speed when the parameter $t$ varies. See Section \ref{s.preliminaries} for the precise definition.} unfolding the heteroclinic tangency of $g_0$ in such a way that the continuations of adequate compact pieces of $W^s(p_s)$ and $W^u(p_u)$ have no intersection near $q$ for $-t_0<t<0$ and two transverse intersections near $q$ for $0<t<t_0$. 

The long-term goal is to understand the local dynamics of $g_t$, $t\in (-t_0, t_0)$, near $K$ and $q$. More precisely, we are interested in the features of the maximal invariant set 
\begin{equation}\label{e.Lambda-gt}
\Lambda_{g_t}:= \bigcap\limits_{n\in\mathbb{Z}} g_t^{-n}(U\cup V)
\end{equation}
where $U\cup V$ is the neighborhood of $K\cup\mathcal{O}(q)$ described above.  

Note that the maximal invariant set 
\begin{equation}\label{e.K-gt}
K_{g_t}:=\bigcap\limits_{n\in\mathbb{Z}} g_t^{-n}(U)
\end{equation}
is a horseshoe corresponding to the natural (hyperbolic) continuation of $K$.

It is not hard to see that $\Lambda_{g_t} = K_{g_t}$ when $-t_0<t<0$. Since $\Lambda_{g_0}=K\cup\mathcal{O}(q)$, we have that the set $\Lambda_{g_t}$ is \emph{not} dynamically interesting when $-t_0<t\leq 0$. 

Given this scenario, it is natural to try to understand $\Lambda_{g_t}$ for $0<t<t_0$. In this direction, 
it is introduced in \cite{PY09} a notion of \emph{strongly regular parameters} $t$ with the property that $\Lambda_{g_t}$ is a \emph{non-uniformly hyperbolic horseshoe} 
whenever $t$ is a strongly regular parameter. Furthermore, 
it is showed in \cite{PY09} that \emph{most} parameters are strongly regular in the sense that 
$$\lim\limits_{s\to0^+}\frac{1}{s}\textrm{Leb}_1(\{0<t\leq s: t \textrm{ is a strongly regular parameter}\})=1$$
for heteroclinic tangencies associated to \emph{slightly thick} horseshoes, i.e., when the initial horseshoe $K_{g_0}$ satisfies 
\begin{equation}\label{e.PYset}
(d_s^0+d_u^0)^2+(\max\{d_s^0, d_u^0\})^2< (d_s^0+d_u^0) + \max\{d_s^0, d_u^0\}
\end{equation} 
where $d_s^0$ and $d_u^0$ (resp.) are the transverse Hausdorff dimensions of the invariant sets $W^s(K_{g_0})$ and $W^u(K_{g_0})$ (resp.). Here $\textrm{Leb}_1$ is the $1$-dimensional Lebesgue measure.

In particular, these results 
in \cite{PY09} imply that, by generically unfolding a heteroclinic tangency associated to a slightly thick horseshoe, the maximal invariant set $\Lambda_{g_t}$ is a non-uniformly hyperbolic horseshoe for most parameters $t>0$ near $0$ in the sense that the density of parameters $t\in (0,s]$ such that $\Lambda_{g_t}$ is a non-uniformly hyperbolic horseshoe tends to $1$ as $s\to 0^+$. 


Among the several geometrical features of non-uniformly hyperbolic horseshoes shown in \cite{PY09}, we recall that:
\begin{theorem}[cf. Theorem 6 in \cite{PY09}]\label{t.PY09-Thm6} Assuming \eqref{e.PYset}, if $t$ is a strongly regular parameter, then 
$$\textrm{Leb}_2(W^s(\Lambda_{g_t}))=\textrm{Leb}_2(W^u(\Lambda_{g_t}))=0,$$
where $\textrm{Leb}_2$ is the $2$-dimensional Lebesgue measure. In particular, $\Lambda_{g_t}$ does not contain attractors nor repellors.
\end{theorem}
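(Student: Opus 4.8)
The plan is to reduce the two vanishing statements to a single estimate on a forward trapped set and then to control that set through the interplay between unstable expansion and the strong regularity of $t$. Write
\[W^s_{\mathrm{loc}}:=\bigcap_{n\geq 0}g_t^{-n}(U\cup V)\]
for the set of points whose forward orbit never leaves $U\cup V$. Since $W^s(\Lambda_{g_t})$ is contained in the countable union $\bigcup_{n\geq 0}g_t^{-n}(W^s_{\mathrm{loc}})$ of diffeomorphic preimages of $W^s_{\mathrm{loc}}$, it suffices to prove $\textrm{Leb}_2(W^s_{\mathrm{loc}})=0$. I would deduce this from the monotone limit $\textrm{Leb}_2(W^s_{\mathrm{loc}})=\lim_{n\to\infty}\textrm{Leb}_2(W_n)$, where $W_n:=\bigcap_{k=0}^{n}g_t^{-k}(U\cup V)$ is a finite union of ``stable strips'' $S_\iota$ indexed by the admissible itineraries $\iota$ of length $n$ in the extended symbolic dynamics that also records the excursions into the tangency region $V$.

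First I would separate the itineraries confined to $U$ from those that visit $V$. The union of the strips coming from itineraries that stay in $U$ is the classical stable lamination $W^s(K_{g_t})$; by the local product structure of the horseshoe its transverse slice is a Cantor set of dimension $d_s^0$ (up to the small continuous variation that is controlled for strongly regular $t$), so $\dim_H W^s(K_{g_t})=1+d_s^0<2$ and hence $\textrm{Leb}_2(W^s(K_{g_t}))=0$. The real content is therefore the contribution of the itineraries that make excursions into $V$, where the hyperbolicity degenerates.

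Each strip $S_\iota$ is contained in a rectangle whose stable extent is bounded above by $\mathrm{diam}(U\cup V)$ and whose unstable width $w^u_\iota$ is at most a constant times the inverse of the expansion that $Dg_t$ accumulates along the unstable cone field over $\iota$. Thus $\textrm{Leb}_2(W_n)\leq C\sum_{|\iota|=n}w^u_\iota$, and the whole problem collapses to showing that this sum tends to $0$, that is, that the topological pressure of minus the logarithm of the unstable expansion rate, taken over the trapped orbits, is strictly negative. In the purely hyperbolic regime this pressure is a negative multiple of $1-d_s^0$, so it is negative precisely because $d_s^0<1$. In the non-uniformly hyperbolic setting the passages through $V$ destroy the expansion, so the decisive point is to show that the strong expansion recovered after each passage near the quadratic tangency, combined with the fact that strong regularity forces the excursions to be sufficiently sparse, still keeps this pressure negative. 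Establishing this quantitative balance is the main obstacle, and it is exactly where the slightly thick hypothesis \eqref{e.PYset} enters: it leaves enough room for the excursion itineraries to be rare and for the post-tangency expansion to compensate their combinatorial growth, so that $\sum_{|\iota|=n}w^u_\iota\to 0$ and hence $\textrm{Leb}_2(W^s_{\mathrm{loc}})=0$. I emphasize that this argument only controls the total area of the $W_n$ and therefore yields Lebesgue measure zero rather than the sharper bound $\dim_H<2$, whose proof would require controlling the unbounded distortion near the tangency at every scale. Replacing $g_t$ by $g_t^{-1}$ gives the symmetric statement $\textrm{Leb}_2(W^u(\Lambda_{g_t}))=0$.

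Finally, the assertion about attractors and repellors is immediate: an attractor contained in $\Lambda_{g_t}$ would have a basin of attraction containing a nonempty open set, hence of positive $\textrm{Leb}_2$, and this basin is contained in $W^s(\Lambda_{g_t})$, contradicting $\textrm{Leb}_2(W^s(\Lambda_{g_t}))=0$; symmetrically, a repellor would force $\textrm{Leb}_2(W^u(\Lambda_{g_t}))>0$, again a contradiction.
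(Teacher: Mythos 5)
Note first that the paper does not prove this statement at all: it is quoted verbatim from \cite{PY09} (Theorem 6 there), and its proof occupies a substantial part of that long work, relying on the full machinery of affine-like iterates, allowed parabolic compositions, strong regularity tests, and the decomposition of $W^s(\Lambda_{g_t})$ into a well-behaved laminated part and the exceptional set $\mathcal{E}^+$. Your reductions are fine as far as they go: localizing to the forward-trapped set, discarding the itineraries confined to $U$ (classical hyperbolic theory, dimension $1+d_s^0<2$), and the final attractor/repellor argument are all correct and standard.

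The genuine gap is in the treatment of itineraries that make excursions into $V$, which is the entire content of the theorem. Your key geometric claim --- that each component $S_\iota$ has unstable width bounded by the reciprocal of the expansion accumulated by $Dg_t$ along the unstable cone field over $\iota$ --- fails precisely at the excursions: near the quadratic tangency the folding map $G=g_t^{N_0}$ does \emph{not} preserve the unstable cone field (it rotates unstable directions into stable ones), so ``accumulated expansion over $\iota$'' is not even well defined for such itineraries, and components of $W_n$ that have passed near the tip are parabolic-shaped regions rather than strips. Consequently the negative-pressure statement to which you reduce everything is not a computation one can do from the stated hypotheses; it is exactly the hard quantitative core of \cite{PY09}, and you assert it rather than prove it (``it leaves enough room \dots so that $\sum_{|\iota|=n}w^u_\iota\to 0$''). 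Note also that strong regularity is defined through bounds on the sizes and numbers of \emph{bicritical} elements of the classes $\mathcal{R}(I)$, not through sparseness of excursions, so even granting your framing there is a long derivation missing; and there is a set your pressure argument cannot see at all, namely the points whose returns fall close to the critical tip at \emph{every} passage (the set $\mathcal{E}^+$), for which no expansion survives and which \cite{PY09} controls instead via the doubly exponential decay of the widths $|P_j|, |Q_j|$ (Lemma 24 there) --- the same mechanism the present paper exploits to prove $HD(\mathcal{E}^+)<2$.
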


In other words, there is no abrupt \emph{explosion} of the local dynamics of $g_t$ on $\Lambda_{g_t}$ for most parameters $t$, 
that is, for any strongly regular parameter $t$, the stable, resp. unstable, set $W^s(\Lambda_{g_t})$, resp. $W^u(\Lambda_{g_t})$, of the non-uniformly hyperbolic horseshoe $\Lambda_{g_t}$ are ``small'' (zero $2$-dimensional Lebesgue measure). 

Actually, it was conjectured in \cite[p. 14]{PY09} that the stable, resp. unstable, sets of non-uniformly hyperbolic horseshoes are \emph{really} small: their Hausdorff dimensions should be \emph{strictly} smaller than $2$, and, in fact, 
close 
to the ``expected'' dimension $1+d_s$, resp. $1+d_u$, where $d_s$, resp. $d_u$, is a 
quantity introduced in \cite{PY09} (close to $d_s^0$, resp. $d_u^0$) measuring the transverse dimension of the stable, resp. unstable, set of the ``main non-uniformly hyperbolic part'' of 
$\Lambda_{g_t}$.

\subsection{Statement of the main result} Our theorem confirms the first part of the conjecture stated above.

\begin{theorem}\label{t.MPY-A} Consider the setting of the previous subsection \ref{ss.PYsetting-intro} of a $1$-parameter family $(g_t)_{|t|<t_0}$ generically unfolding a heteroclinic tangency associated to two distinct periodic orbits belonging to a initial slightly thick horseshoe $K_{g_0}$ in the sense of \eqref{e.PYset} above. 

Then, for any strongly regular parameter $t$, one has 
$$\textrm{HD}(W^s(\Lambda_{g_t}))<2 \quad \textrm{ and } \quad \textrm{HD}(W^u(\Lambda_{g_t}))<2$$
where $\textrm{HD}$ stands for the Hausdorff dimension.
\end{theorem}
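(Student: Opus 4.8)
The plan is to bound $\textrm{HD}(W^s(\Lambda_{g_t}))$ from above by covering the stable set with dynamically adapted rectangles and showing that the associated $d$-dimensional sum stays bounded for some explicit $d<2$. Recall that $W^s(\Lambda_{g_t})$ consists of the points whose forward orbit never leaves $U\cup V$. Using the Markov/affine-like description of the return dynamics from \cite{PY09}, I would cover this set, for each generation $m$, by the rectangles corresponding to admissible itineraries of length $m$; as $m\to\infty$ these refine and their diameters shrink, so it suffices to control $\sum_R (\textrm{diam}\,R)^{d}$ over generation-$m$ rectangles uniformly in $m$. Since each such rectangle is, up to bounded distortion, a product of a stable curve of definite length and a short transverse arc, the contribution of the stable direction to the dimension is exactly $1$, and the whole problem reduces to estimating the transverse sizes — equivalently, to bounding a pressure-type quantity for the transverse contraction by a negative number for a suitable exponent $d<2$.

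Two regimes must be distinguished inside these sums. For the part of the itineraries that stays in $U$ — i.e.\ the stable set $W^s(K_{g_t})$ of the hyperbolic continuation of the horseshoe — the affine-like maps are uniformly hyperbolic, the transverse set is a regular Cantor set, and the classical theory (McCluskey--Manning--Bowen) gives transverse dimension close to $d_s^0$ and hence $\textrm{HD}(W^s(K_{g_t}))=1+d_s(K_{g_t})<2$, the strict inequality being guaranteed by \eqref{e.PYset}, which forces $d_s^0<1$. The genuinely new contribution comes from the itineraries that enter the neighborhood $V$ of the tangency: there the map folds, behaving quadratically in suitable coordinates, so that a stable curve is sent to a parabola and the transverse size of a rectangle is multiplied by a square-root factor. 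This square-root thickening at each passage near $q$ is the precise origin of the potential loss of dimension, and controlling its cumulative effect is the heart of the matter.

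The quantitative input comes from the strong regularity of $t$. I would extract from it uniform exponential lower bounds on the expansion (hence on the transverse contraction of the inverse branches) accumulated during each excursion near the tangency, together with upper bounds on the combinatorial number of admissible excursions of a given length and on the corresponding diameters. These estimates feed directly into the generation-$m$ sums: the bound $\sum_R (\textrm{diam}\,R)^{d}\le C$ reduces to the negativity, for some $d<2$, of a topological-pressure functional in which the exponential multiplicity of excursions competes against the exponential transverse contraction, with the square-root losses absorbed into the size estimates. It is exactly here that the slightly-thick hypothesis \eqref{e.PYset} is decisive: it places the relevant expansion and recurrence exponents in the range for which this pressure is negative with $d$ strictly below $2$, so that the generation-$m$ sums remain bounded and the $d$-dimensional Hausdorff measure of $W^s(\Lambda_{g_t})$ is finite.

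The main obstacle is precisely the transverse estimate near the tangency. Because the hyperbolicity degenerates where $W^s(p_s)$ and $W^u(p_u)$ meet tangentially, the transverse Cantor set threatens to have dimension approaching $1$, which would only reproduce the trivial bound $\textrm{HD}\le 2$ and the vanishing of $\textrm{Leb}_2$ already furnished by Theorem \ref{t.PY09-Thm6}. Upgrading ``Lebesgue-null'' to ``Hausdorff dimension strictly below $2$'' demands genuinely quantitative, scale-summable control: one must show that the orbits suffering repeated weak contraction near $q$ carry only a geometrically small share of each generation-$m$ sum, rather than merely forming a null set. I expect this to require a careful large-deviation/parabolic-excursion analysis built on the fine recurrence statistics of strongly regular parameters, ensuring that a single exponent $d<2$ works uniformly across all scales and all itineraries and thereby closing the estimate.
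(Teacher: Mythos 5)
There is a genuine gap, and you in fact point at it yourself in your final paragraph: the whole argument hinges on ``genuinely quantitative, scale-summable control'' of the orbits that repeatedly pass near the tangency, and your proposal defers exactly this step to an unspecified ``large-deviation/parabolic-excursion analysis.'' Two concrete problems. First, your reduction to a one-dimensional transverse pressure problem assumes that each generation-$m$ rectangle is, up to bounded distortion, a product of a stable curve and a short transverse arc. This is precisely what fails for itineraries with repeated passages through $V$: each passage through the fold $G=g^{N_0}$ sends strips to parabolic regions with cusps, distortion is unbounded near the tangency, and after several passages these ``rectangles'' have no product structure at all, so there is no bounded-distortion Markov scheme on which to run a pressure/McCluskey--Manning type argument --- the reduction silently assumes away the very obstacle you then identify as the heart of the matter. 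Second, the quantitative input you hope to extract from strong regularity (exponential expansion per excursion competing against exponential multiplicity of excursions) is not the mechanism that makes the estimate close; what strong regularity actually delivers for the dangerous orbits is much stronger, namely the \emph{doubly} exponential decay \eqref{e.Lemma24}--\eqref{e.Lemma24'} of the widths of the consecutive strips $P_j$, $Q_j$ they visit, and without this double-exponential gain the multiplicity created by the folds cannot be beaten by any fixed exponent $d<2$.

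For comparison, the paper quarantines the folding problem rather than confronting it inside a pressure functional. One first splits $W^s(\Lambda)$ into its well-behaved part (the lamination $\widetilde{\mathcal{R}}^{\infty}_+$, of dimension $1+d_s<2$ by Theorem 4 of \cite{PY09}) and the exceptional set $\mathcal{E}^+$ of \eqref{e.exceptional-set-def}; Proposition \ref{p.HD-Ws-E+} reduces everything to proving $HD(\mathcal{E}^+)<2$. Then $\mathcal{E}^+$ is decomposed along admissible sequences $(P_0,\dots,P_k)$ of parabolic cores, and the key covering argument (Lemma \ref{l.mdE+P0...Pk}) runs as follows: cover a far-forward image of $\mathcal{E}^+(P_0,\dots,P_k)$ by squares and pull back; each affine-like backward iterate stretches squares into strips, and --- this is the idea missing from your proposal --- \emph{before} each backward passage through the fold one re-subdivides every strip into new squares, thereby giving up all control of the fine folded geometry and keeping only a cardinality count. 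The resulting multiplicity $\prod_j \widetilde{N}_j$ is then absorbed by the double-exponential decay of $|P_j|,|Q_j|$, and the final summation over admissible sequences (Lemma \ref{l.mdE+P0...Pk2} together with the counting estimates for critical elements from Section 11 of \cite{PY09}, where hypothesis \eqref{e.PYset} enters through $d_s+d_u<6/5$) yields $m^d_{s_k}(\mathcal{E}^+)\to 0$ along a sequence of scales $s_k\to 0$ for an explicit $d<2$. So your split into a hyperbolic regime and a tangency regime, and your instinct about where the enemy lies, are both sound; but as written the proposal lacks the two ingredients that actually close the estimate: the doubly exponential width decay and the square-resubdivision covering trick.
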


\begin{remark} It is worth to point out that we prove Theorem \ref{t.MPY-A} for the \emph{same} strongly regular parameters of Palis-Yoccoz \cite{PY09}, but our arguments involve only \emph{soft analysis} instead of the parameter exclusion methods in \cite{PY09}. 
\end{remark}

\begin{remark} Our proof of Theorem \ref{t.MPY-A} does not allow us to settle the second part of the previous conjecture (on the exact value of the Hausdorff dimension of stable sets of non-uniformly hyperbolic horseshoes): see Remark 
\ref{r.expected-dimension-MPY-A} below for more explanations. 
\end{remark}

\subsection{Outline of the proof of the main result} The stable $W^s(\Lambda)$ of a non-uniformly hyperbolic horseshoe $\Lambda$ is naturally decomposed into a ``well-behaved'' part and an ``exceptional'' part (cf. Subsection 11.6 of \cite{PY09} and/or Section \ref{s.preliminaries} below). 

Roughly speaking, the well-behaved part of $W^s(\Lambda)$ consists of points captured by ``stable curves'' obtained as the intersections of decreasing sequences of certain domains (``strips'') where adequate iterates of the dynamics behave like ``affine hyperbolic maps'' (in appropriate coordinates). 

From these features of the dynamics on the well-behaved part of $W^s(\Lambda)$, it is possible to show that its decomposition into stable curves is a lamination with $C^{1+Lip}$-leaves and Lipschitz holonomy (cf. Subsection 10.5 of \cite{PY09}) whose transverse Hausdorff dimension $0<d_s<1$ is close to the stable dimension $d_s^0$ of the initial horseshoe $K_{g_0}$ (cf. Theorem 4 in Subsection 10.10 of \cite{PY09}). 

In particular, the well-behaved part of $W^s(\Lambda)$ has Hausdorff dimension $1+d_s$, and, hence, our task consists into studying the geometry of the exceptional part $\mathcal{E}^+$ of $W^s(\Lambda)$. 

In other words, the proof of Theorem \ref{t.MPY-A} is reduced to show that the Hausdorff dimension of the exceptional part $\mathcal{E}^+$ of $W^s(\Lambda)$ is $HD(\mathcal{E}^+)<2$.  

By definition, the exceptional part $\mathcal{E}^+$ of $W^s(\Lambda)$ consists of points whose forward orbits get ``very close'' to the ``critical locus'' (of tangencies) infinitely many times. In fact, between ``affine-like hyperbolic'' iterations, the forward orbit of a point in $\mathcal{E}^+$ visits a sequence of domains (``parabolic cores $c(P_k)$ of strips $P_k$'') close to the critical locus whose ``widths'' decay with a \emph{double exponential} rate (cf. Lemma 24 of \cite{PY09}). 

The scenario described in the previous paragraph imposes \emph{strong geometrical constraints} on $\mathcal{E}^+$. For the sake of comparison, it is worth to point out that the forward orbit of a point in the stable set of a \emph{uniformly} hyperbolic horseshoe visits a sequence of ``strips'' (cylinders of a Markov partition) whose ``widths'' decay with an \emph{exponential} rate. In particular, this suggests that $\mathcal{E}^+$ is very small when compared with the well-behaved part of $W^s(\Lambda)$. 

We show that $HD(\mathcal{E}^+)<2$ by combining the geometrical constraints on its forward iterates described above with the following simple argument. 

We know that, between affine-like iterations, the forward images of a point in $\mathcal{E}^+$ under the dynamics fall in a sequence of strips $P_k$, $k\in\mathbb{N}$, whose widths decay with a double exponential rate. By fixing $k\in\mathbb{N}$ large and decomposing the strip $P_k$ into squares, we obtain a covering of very small diameter of the image of $\mathcal{E}^+$ under some positive iterate of the dynamics. 

Now, we want to use negative iterates of the dynamics to bring this covering of $P_k$ back to $\mathcal{E}^+$. On one hand, we observe that each square becomes a strip under affine-like iterates of the dynamics. On the other hand, these strips might get folded during non-affine-like iterations (when the strips visit the parabolic cores of $P_j$). Since these folding effects can accumulate very quickly, it is not easy to keep control of their fine geometry in our way back from $P_k$ to $\mathcal{E}^+$. 

Fortunately, if one wants just to prove that $HD(\mathcal{E}^+)<2$, then we can simply ``forget'' about the fine details of the geometries of these folded strips inside the $P_j$, $0\leq j\leq k$, by thinking of them as ``fat strips''. In other words, when the strips acquire ``parabolic shapes'' due to the folds occuring inside $P_j$'s, we treat these ``parabolic shapes'' as \emph{new} strips, we decompose them into \emph{new} squares and we bring back each of these squares \emph{individually} under the dynamics. Of course, the number of squares increases \emph{significantly} each time we perform this procedure, but we will see that the resulting cover of $\mathcal{E}^+$ has a \emph{mild} cardinality (in comparison with its diamater) thanks to the double exponential decay of the widths of $P_j$'s and the affine-like features of the dynamics between consecutive passages through the parabolic cores of $P_j$'s. In particular, by letting $k\in\mathbb{N}$ vary, this argument will provide a sequence of covers of $\mathcal{E}^+$ whose diameters approach zero allowing to conclude that $HD(\mathcal{E}^+)<2$, and, \emph{a fortiori}, the proof of Theorem \ref{t.MPY-A}.


\subsection{Organization of the article} The remainder of this paper is divided in two sections. In Section \ref{s.preliminaries}, we will recall for later use some material from the article \cite{PY09} including the basic features of non-uniformly hyperbolic horseshoes. After that, we prove Theorem \ref{t.MPY-A} in Section \ref{s.MPY-A}.   

\subsection*{Acknowledgments} This text is much influenced by our forthcoming joint work \cite{MPY} with Jean-Christophe Yoccoz: we were very fortunate to have known and worked with him. We are also grateful to the following institutions for their hospitality during the preparation of this article: Coll\`ege de France, Instituto de Matem\'atica Pura e Aplicada (IMPA), and Kungliga Tekniska H\"ogskolan (KTH). The authors were partially supported by the Balzan Research Project of J. Palis and the French ANR grand ``DynPDE'' (ANR-10-BLAN 0102). Last, but not least, we are thankful to the referee for carefully reading this article.  

\section{Preliminaries}\label{s.preliminaries}

In this section, we will briefly review some of the main features of the non-uniformly horseshoes introduced in \cite{PY09}. 

\subsection{Heteroclinic bifurcations}

Let $f:M\to M$ be a smooth ($C^{\infty}$) diffeomorphism of a compact surface $M$. Suppose that $f$ possesses a horseshoe $K$ containing two periodic points $p_s$ and $p_u$ involved in a \emph{quadratic heteroclinic tangency}, that is, the periodic points $p_s, p_u\in K$ belong to distinct periodic orbits, the invariant manifolds $W^s(p_s)$ and $W^u(p_u)$ meet \emph{tangentially} at some point $q\in M-K$, and the curvatures of $W^s(p_s)$ and $W^u(p_u)$ at $q$ are distinct. Moreover, we assume that there are neighborhoods $U$ of $K$ and $V$ of the orbit $\mathcal{O}(q)$ such that $K\cup\mathcal{O}(q)$ is the maximal invariant set of $U\cup V$. See Figure \ref{f.heteroclinic-tangency} above. 

In the sequel, $\mathcal{U}$ denots a sufficiently small neighborhood of the diffeomorphism $f$ (in $\textrm{Diff}^{\infty}(M)$) such that all relevant dynamical objects admit natural continuations. In particular, we will assume that the dynamical objects introduced above (namely, $p_s, p_u, K$) admit natural (hyperbolic) continuations for $g\in\mathcal{U}$.

Given $g\in\mathcal{U}$, we have exactly three (mutually exclusive) possibilities for the intersection of some appropriate compact pieces of the continuations of $W^s(p_s)$ and $W^u(p_u)$ near $q$: they meet at no point, they meet tangentially at one point or they meet transversely at two points. By definition: 
\begin{itemize}
\item $g\in\mathcal{U}_-$ in the first case (of no intersection near $q$), 
\item $g\in\mathcal{U}_0$ in the second case (of one tangential intersection near $q$), 
\item $g\in\mathcal{U}_+$ in the third case (of two transverse intersections near $q$).
\end{itemize}
In particular, $\mathcal{U}_0$ is a codimension $1$ submanifold, and we have that $\mathcal{U}=\mathcal{U}_-\cup\mathcal{U}_0\cup\mathcal{U}_+$. 

We wish to understand the local dynamics of $g$ near $K$ and $q$ for $g\in\mathcal{U}$. More precisely,  let $(g_t)_{|t|<t_0}$ be a $1$-parameter family \emph{generically} unfolding the heteroclinic tangency of $g_0$ (associated to the continuations of the periodic points $p_s$ and $p_u$), that is, $(g_t)_{|t|<t_0}$ satisfies: 
\begin{itemize}
\item $g_0\in\mathcal{U}_0$,  
\item $g_t\in\mathcal{U}_+$ for $t>0$, and
\item $(g_t)_{|t|<t_0}$ is transverse to $\mathcal{U}_0$ at $g_0$.
\end{itemize}
In this setting, we want to describe the features of the maximal invariant set 
\begin{equation}\label{e.Lambda-g}
\Lambda_{g_t}:= \bigcap\limits_{n\in\mathbb{Z}} g_t^{-n}(U\cup V)
\end{equation}
where $U\cup V$ is the neighborhood of $K\cup\mathcal{O}(q)$ described above.  

\begin{remark} The set 
\begin{equation}\label{e.K-g}
K_{g_t}:=\bigcap\limits_{n\in\mathbb{Z}} g_t^{-n}(U)
\end{equation}
is a horseshoe of $g\in\mathcal{U}$.
\end{remark}

It is not hard to see that $\Lambda_{g_t}=K_{g_t}$ is a horseshoe when $-t_0<t<0$ and $\Lambda_{g_0}=K_{g_0}\cup\mathcal{O}(q_{g_0})$ for $t=0$ (where $q_{g_0}$ is the tangency point near $q$ referred to in the definition of $\mathcal{U}_0$). 

In other terms, the set $\Lambda_{g_t}$ is \emph{not} dynamically interesting when $-t_0<t\leq 0$, and, thus, one can focus exclusively on the sets $\Lambda_{g_t}$ for $0<t<t_0$. 

\subsection{Strongly regular parameters}\label{ss.strong-regular-1} Up to a reparametrization, we can assume that the parameter coordinate $t$ of the $1$-parameter family $(g_t)_{|t|<t_0}$ is normalized by the relative speed at the quadratic tangency: in other words, $t$ is the (oriented) distance between a piece of $W^u(p_u)$ near $q_{g_0}$ and the tip of the parabolic arc consisting of a piece of $W^s(p_s)$ near $q$ (see Section 4 of \cite{PY09} for more details). 

The \emph{strongly regular parameters} in \cite{PY09} are defined via an inductive scheme. Roughly speaking, we consider two very small constants $0<\varepsilon_0\ll\tau\ll 1$ and we define a sequence of scales $\varepsilon_{k+1} = \varepsilon_k^{1+\tau}$, $k\in\mathbb{N}$. The inductive scheme begins with the \emph{candidate} interval $I_0=[\varepsilon_0, 2\varepsilon_0]$. At the $k$th step of the inductive scheme, we divide the selected candidate intervals of the previous step into $\lfloor\varepsilon_k^{-\tau}\rfloor$ disjoint candidates of lengths $\varepsilon_{k+1}$. Then, each of these candidate intervals $I_k$ passes a \emph{strong regularity test}: the candidates passing the test are selected for the next ($(k+1)$th) step while the candidates failing the test are discarded. 

We will discuss some of the requirements in the strong regularity tests later, but for now let us mention that the precise definition of these tests in \cite{PY09} \emph{depends} on the condition \eqref{e.PYset} on the stable dimension $d_s^0$ and the unstable dimension $d_u^0$ of the horseshoe $K_{g_0}$, i.e., 
$$(d_s^0+d_u^0)^2+(\max\{d_s^0, d_u^0\})^2< (d_s^0+d_u^0) + \max\{d_s^0, d_u^0\}$$
For this reason, we will always assume that the initial horseshoe $K_{g_0}$ is slightly thick in the sense that the condition \eqref{e.PYset} is satisfied.

In this setting, the \emph{strongly regular parameters} $t\in I_0 = [\varepsilon_0,2\varepsilon_0]$ are defined as those parameters belonging to a decreasing sequence of candidate intervals passing the strong regularity tests, i.e., $\{t\}=\bigcap\limits_{k\in\mathbb{N}} I_k$ where $I_k$ are a selected candidate interval for the $(k+1)th$ step with $I_{k+1}\subset I_k$.

The ``non-uniform hyperbolicity'' of $\Lambda_{g_t}$ for strongly regular parameters $t\in I_0$ is ensured by the (very intricate) nature of the strong regularity tests applied to the candidate intervals $I_k$ with $\bigcap\limits_{k\in\mathbb{N}} I_k=\{t\}$. We will come back to this point later. 

Of course, the notion of strong regularity tests in \cite{PY09} is interesting for at least two reasons. Firstly, it is sufficiently rich to guarantee several nice properties of ``non-uniform hyperbolicity'' of $\Lambda_{g_t}$ for strongly regular parameters. Secondly, it is also a sufficiently mild constraint satisfied by a set of large measure of parameters. More precisely, it is shown in Corollary 15 of \cite{PY09} that the set of strongly regular parameters $t\in I_0=[\varepsilon_0, 2\varepsilon_0]$ has Lebesgue measure $\varepsilon_0(1 - 3\varepsilon_0^{\tau^2})$. 

Before describing the nature of strong regularity tests, we will need the preparatory material from the next three subsections where an adequate class $\mathcal{R}(I)$ of \emph{affine-like iterates} of $g$ will be attached to each candidate interval $I$. 

\subsection{Localization of the dynamics}\label{ss.local-dynamics}

We fix \emph{geometrical Markov partitions} of the horseshoes $K_g$ depending smoothly on $g\in\mathcal{U}$. In other terms, we choose a finite system of smooth charts $I_a^s\times I_a^u\to R_a\subset M$ indexed by a finite alphabet $a\in\mathcal{A}$ with the properties that these charts depend smoothly on $g\in\mathcal{U}$, the intervals $I_a^s$ and $I_a^u$ are compact, the rectangles $R_a$ are disjoint, the horseshoe $K_g$ is the maximal invariant set of $R:=\bigcup\limits_{a\in\mathcal{A}}R_a$, the family $(K_g\cap R_a)_{a\in\mathcal{A}}$ is a Markov partition of $K_g$ for $g\in\mathcal{U}$, and the boundaries of the rectangles $R_a$ are pieces of stable and unstable manifolds of periodic points in $K_g$. Moreover, we assume that no rectangle meets the orbits of $p_s$ and $p_u$ at the same time. See Figure \ref{f.heteroclinic-tangency} above. 

In this context, we have that the Markov partition $(K_g\cap R_a)_{a\in\mathcal{A}}$ provides a topological conjugacy between the dynamics of $g$ on $K_g$ and the subshift of finite type of $\mathcal{A}^{\mathbb{Z}}$ whose transitions are 
$$\mathcal{B}:=\{(a,a')\in\mathcal{A}^2: f(R_a)\cap R_{a'}\cap K_f\neq\emptyset\}.$$

Next, we observe that, for each $g\in\mathcal{U}_+$, we have a compact lenticular region $L_u\subset R_{a_u}$ (near the initial heteroclinic tangency point $q\in M-K$ of $f$)
whose boundary is the union of a piece of the unstable manifold of $p_u$ and a piece of the stable manifold of $p_s$. Furthermore, since no rectangle meets both orbits of $p_s$ and $p_u$, the lenticular region $L_u$ travels outside $R$ for $N_0-1$ iterates of $g\in\mathcal{U}_+$ before entering $R$ (for some integer $N_0=N_0(f)\geq 2$). The image $L_s=g^{N_0}(L_u)$ of $L_u$ under $G:=g^{N_0}|_{L_u}$ defines another lenticular region $L_s\subset R_{a_s}$ (whose boundary is also the union of pieces of the stable manifold of $p_s$ and the unstable manifold of $p_u$). The lenticular regions $g^{i}(L_u)$, $0\leq i\leq N_0$ are called \emph{parabolic tongues}.

\begin{figure}[htb!]
\includegraphics[scale=0.5]{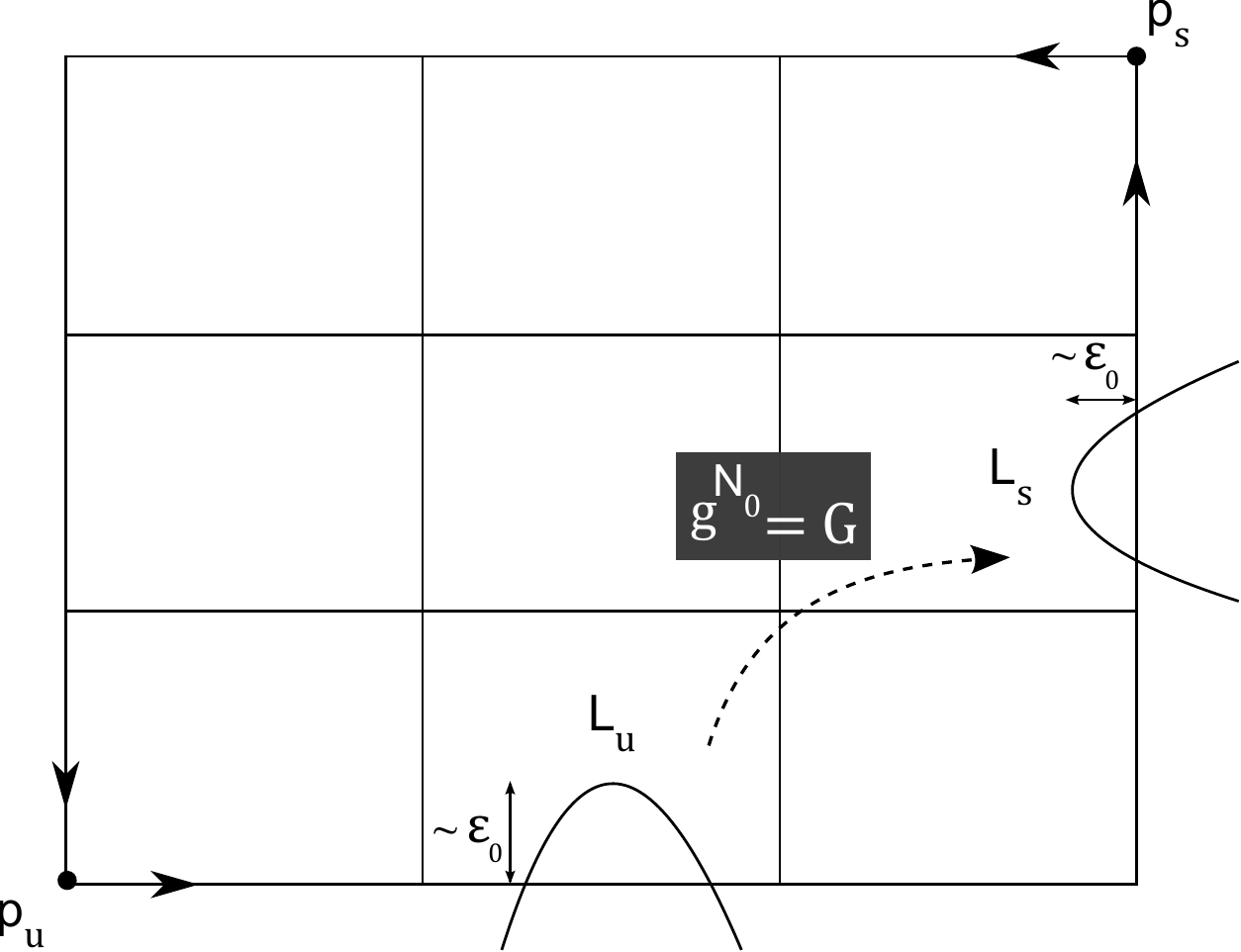}
\caption{Local dynamics near the parabolic tongues.}
\end{figure}

Let $\widehat{R}:=R\cup\bigcup\limits_{0<i<N_0} g^i(L_u)$. By definition, the set $\Lambda_g$ introduced in \eqref{e.Lambda-g} is the maximal invariant set of $\widehat{R}$, i.e., 
$$\Lambda_g=\bigcap\limits_{n\in\mathbb{Z}} g^{-n}(\widehat{R})$$
In other terms, the dynamics of $g\in\mathcal{U}_+$ on $\Lambda_g$ is localized in the region $\widehat{R}$ consisting of the Markov partition $R$ of the horseshoe $K_g$ and the parabolic tongues $g^i(L_u)$, $0<i<N_0$, associated to the unfolding of the heteroclinic tangency. 

Note that the dynamics of $g$ on $\widehat{R}$ is an \emph{iterated function system}, i.e., it is generated by the transition maps 
$$g_{aa'}=g|_{R_a\cap g^{-1}(R_{a'})}:R_a\cap g^{-1}(R_{a'})\to g(R_a)\cap R_{a'}, \quad (a,a')\in\mathcal{B},$$
related to the Markov partition $R$, and the folding map $G=g^{N_0}|_{L_u}:L_u\to L_s$ between the parabolic tongues.

In this language, we see that the transition maps $g_{aa'}$ behave like affine hyperbolic maps: for our choices of charts, $g_{aa'}$ contracts almost vertical directions and expands almost horizontal directions. Of course, this hyperbolic structure is not preserved by the folding map $G$ (as it might exchange almost horizontal and almost vertical directions) and this is the main source of non-hyperbolicity of $\Lambda_g$. 

In particular, it is not surprising that the definition in \cite{PY09} of non-uniformly hyperbolic horseshoes involves the features of a certain class of affine-like iterates of $g$. Before pursuing this direction, let us quickly remind the notion of \emph{affine-like maps}. 

\subsection{Affine-like maps} Let $I_0^s, I_0^u, I_1^s$ and $I_1^u$ be compact intervals and denote by $x_0,y_0, x_1$ and $y_1$ their corresponding coordinates. We say that a diffeomorphism $F$ from a \emph{vertical strip}
$$P:=\{(x_0,y_0):\varphi^-(y_0)\leq x_0\leq\varphi^+(y_0)\}\subset I_0^s\times I_0^u$$
onto a \emph{horizontal strip}
$$Q:=\{(x_1,y_1): \psi^-(x_1)\leq y_1\leq \psi^+(x_1)\}\subset I_1^s\times I_1^u$$
is \emph{affine-like} if the natural projection from the graph of $F$ to $I_0^u\times I_1^s$ is a diffeomorphism onto $I_0^u\times I_1^s$. 

By definition, an affine-like map $F$ has an \emph{implicit representation} $(A,B)$, i.e., there are smooth maps $A$ and $B$ on $I_0^u\times I_1^s$ such that $F(x_0,y_0)=(x_1,y_1)$ if and only if $x_0=A(y_0,x_1)$ and $y_1=B(y_0,x_1)$. 

In the context of $1$-parameter families $(g_t)_{|t|<t_0}$ generically unfolding heteroclinic bifurcations, we will consider exclusively affine-like maps satisfying a certain \emph{cone condition} and a certain \emph{distortion estimate}. 

More precisely, let $\lambda>1$, $u_0>0$, $v_0>0$ with 
$$1<u_0v_0\leq\lambda^2$$
and $D_0>0$ be the constants fixed in page 32 of \cite{PY09}: the choices of these constants depend only on the features of the initial diffeomorphism $f\in\mathcal{U}$.  

We say that an affine-like map $F(x_0,y_0)=(x_1,y_1)$ with implicit representation $(A,B)$ satisfies a \emph{cone condition} with parameters $(\lambda, u, v)$ whenever 
$$\lambda|A_x|+u_0|A_y|\leq 1 \quad \textrm{ and } \lambda|B_y|+v_0|B_x|\leq 1$$
where $A_x, A_y, B_x, B_y$ are the first order partial derivatives of $A$ and $B$. Also, we say that an affine-like map $F(x_0,y_0)=(x_1,y_1)$ with implicit representation $(A,B)$ satisfies a \emph{cone condition} with parameter $2D_0$ whenever the absolute values of the six functions
$$\partial_x\log|A_x|, \partial_y\log|A_x|, A_{yy}, \partial_y\log|B_y|, \partial_x\log|B_y|, B_{xx}$$
are uniformly bounded by $2D_0$. 

\begin{remark}\label{r.width-def.} Given an affine-like map $F:P\to Q$ with implicit representation $(A,B)$, we say that 
$$|P|:=\max|A_x| \quad \textrm{and} \quad |Q|:=\max|B_y|$$
are the widths of the domain $P$ and the image $Q$ of $F$. The widths have the property that $|P|\leq C\min|A_x|$ and $|Q|\leq C\min|B_y|$ where $C=C(f)\geq1$ is a constant depending only on $f\in\mathcal{U}$.
\end{remark}

The most basic examples of affine-like maps satisfying the cone and distortion conditions with parameters $(\lambda, u_0,v_0,2D_0)$ are the transition maps $g_{aa'}$ associated to the Markov partition $R$ of the horseshoe $K_g$ of $g\in\mathcal{U}$ (cf. Subsection 3.4 of \cite{PY09}). 

For our purposes, it is important to recall that new affine-like maps can be constructed from the so-called \emph{simple} and \emph{parabolic} compositions of two affine-like maps. 

Let $I_j^s, I_j^u$, $j=0,1,2$, be compact intervals and let $F:P\to Q$ and $F':P'\to Q'$ be two affine-like maps with domains $P\subset I_0^s\times I_0^u$ and 
$P'\subset I_1^s\times I_1^u$ and images $Q\subset I_1^s\times I_1^u$ and $Q'\subset I_2^s\times I_2^u$. Assume that both $F$ and $F'$ satisfy the cone condition with parameters $(\lambda, u_0, v_0)$. Then, the map $F''=F'\circ F$ from $P''=P\cap F^{-1}(P')$ to $Q''=Q\cap P'$ is an affine-like map satisfying the cone condition with parameters $(\lambda^2, u_0, v_0)$ called the \emph{simple composition} of $F$ and $F'$ (cf. Subsection 3.3 of \cite{PY09}). 

\begin{figure}[htb!]
\includegraphics[scale=0.5]{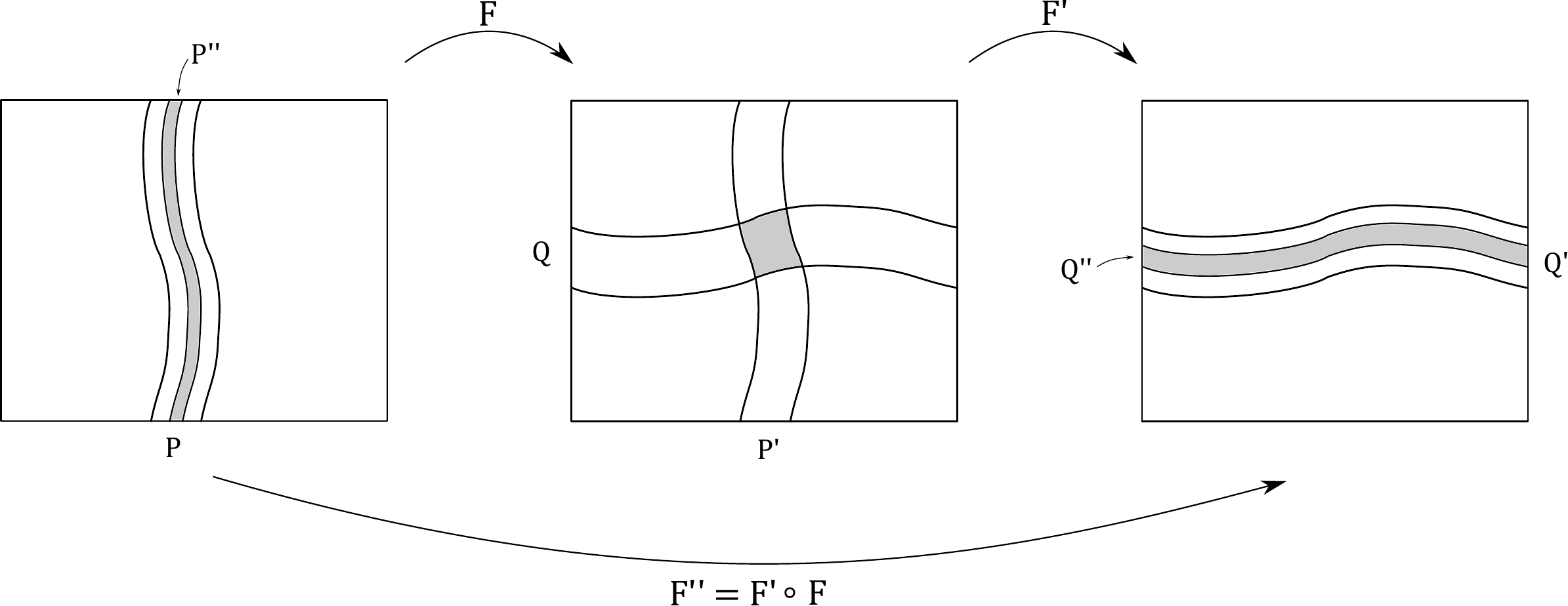}
\caption{Simple composition of affine-like maps.}
\end{figure}

Next, let $G$ be the folding map introduced in Subsection \ref{ss.local-dynamics} above (see also Subsection 2.3 of \cite{PY09}). Consider compact intervals $I_j^s, I_j^u$, $j=0,1$, and two affine-maps $F_0:P_0\to Q_0$, $F_1:P_1\to Q_1$ from vertical strips $P_0\subset I_0^s\times I_0^u$, $P_1\subset I_{a_s}^s\times I_{a_s}^u$ to horizontal strips $Q_0\subset I_{a_u}^s\times I_{a_u}^u$, $Q_1\subset I_1^s\times I_1^u$. As it is explained in Subsection 3.5 of \cite{PY09}, when a certain quantity $\delta(Q_0, P_1)$ roughly measuring the distance between $Q_0$ and the tip of the parabolic strip $G^{-1}(P_1)$ satisfies 
$$\delta(Q_0, P_1)>(1/b)(|P_1|+|Q_0|)$$
and the implicit representations of $F_0$ and $F_1$ to satisfy the bound $$\max\{|(A_1)_y|, |(A_1)_{yy}|, |(B_0)_x|, |(B_0)_{xx}|\}<b$$
for an adequate constant $b=b(f)>0$ depending only on $f\in\mathcal{U}$, the composition $F_1\circ G\circ F_0$ defines two affine-like maps $F^{\pm}:P^{\pm}\to Q^{\pm}$ with domains $P^{\pm}\subset P_0$ and $Q^{\pm}\subset Q_1$ called the \emph{parabolic compositions} of $F_0$ and $F_1$.

\begin{figure}[hbt!]
\includegraphics[scale=0.45]{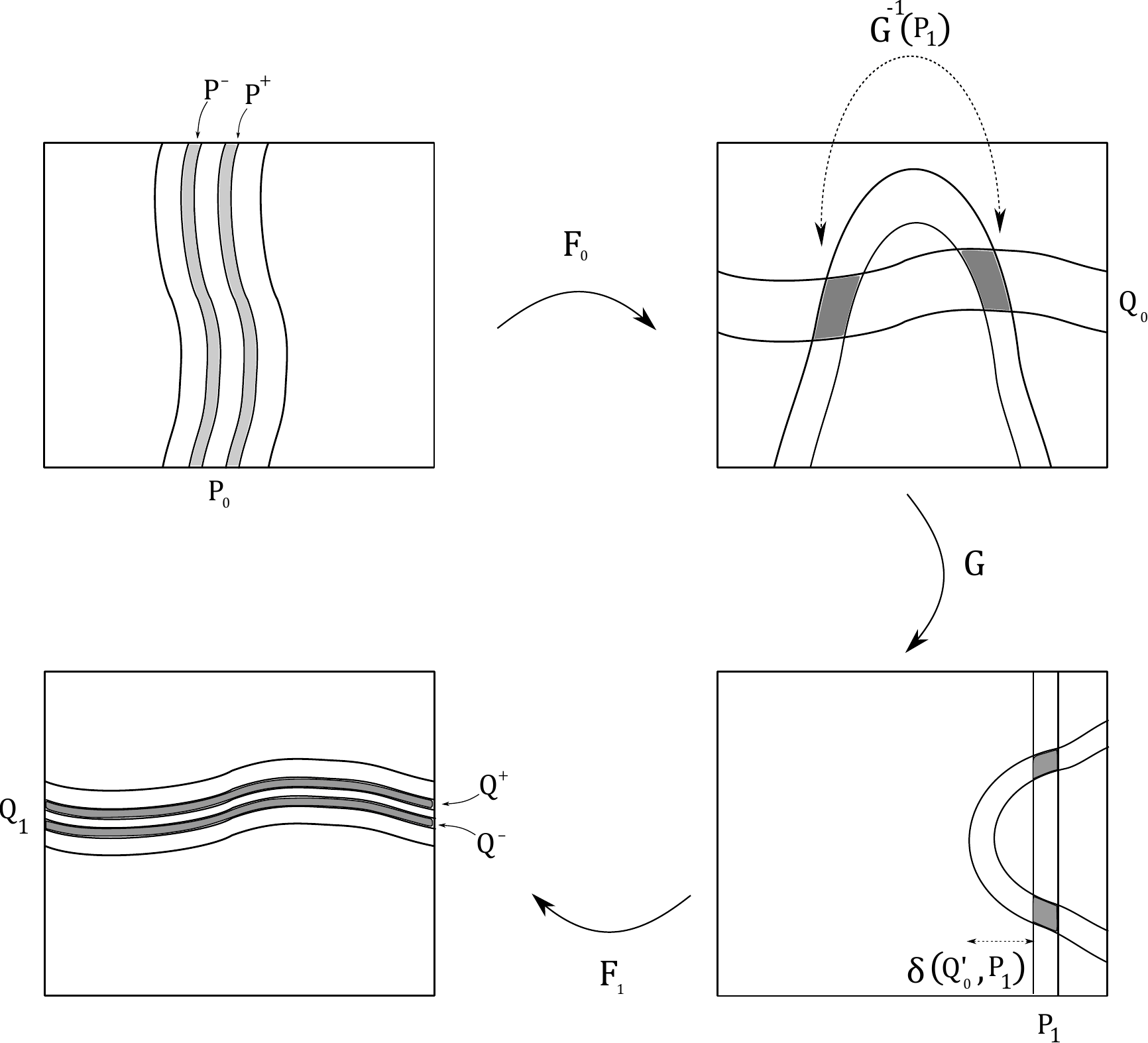}
\caption{Parabolic composition of affine-like maps.}
\end{figure}

\subsection{The class $\mathcal{R}(I)$ of certain $I$-persistent affine-like iterates} Coming back to the setting of Subsection \ref{ss.strong-regular-1}, let us consider again a $1$-parameter family $(g_t)_{|t|<t_0}$ generically unfolding a heteroclinic tangency (with normalized relative speed). Given $I\subset [\varepsilon_0,2\varepsilon_0]$ a parameter interval, we say that a triple $(P,Q,n)=(P_t, Q_t, n)_{t\in I}$ is a $I$-\emph{persistent affine-like iterate} whenever 
\begin{itemize}
\item $P_t\subset R_a$, $a\in\mathcal{A}$, is a vertical strip depending smoothly on $t\in I$, 
\item $Q_t\subset R_{a'}$, $a'\in\mathcal{A}$, is a horizontal strip depending smoothly on $t\in I$,
\item $n\geq 0$ is an integer such that, for all $t\in I$, the restriction $g_t^n|_{P_t}:P_t\to Q_t$ is an affine-like map (satisfying a cone condition) and, for each $0\leq m\leq n$, $g_t^m(P_t)\subset\widehat{R}$.
\end{itemize}

In Subsection 5.3 of \cite{PY09}, it is assigned to each candidate parameter interval $I$ a class $\mathcal{R}(I)$ of certain $I$-persistent affine-like iterates satisfying a list of seven conditions (R1) to (R7). 

Among these conditions, it is worth to point out that: 
\begin{itemize}
\item the transition maps 
$$g_{aa'}:R_a\cap g^{-1}(R_{a'})\to g(R_a)\cap R_{a'}, \quad (a,a')\in\mathcal{B},$$
belong to $\mathcal{R}(I)$ (cf. (R1) in \cite{PY09}), 
\item all $I$-persistent affine-like iterates $(P,Q,n)\in\mathcal{R}(I)$ satisfy the cone condition with parameters $(\lambda, u_0, v_0)$ and the distortion condition with parameter $2D_0$ (cf. (R2) in \cite{PY09}), 
\item the class $\mathcal{R}(I)$ is stable under simple compositions (cf. (R3) in \cite{PY09}) and certain \emph{allowed} parabolic compositions (cf. (R5) in \cite{PY09}), 
\item all $I$-persistent affine-like iterates $(P,Q,n)\in\mathcal{R}(I)$ with $n>1$ are obtained from simple or allowed parabolic compositions of shorter elements (cf. (R6) in \cite{PY09}),
\item if the parabolic composition of $(P_0, Q_0, n_0), (P_1, Q_1, n_1)\in\mathcal{R}(I)$ is allowed, then 
$$\delta(Q_0, P_1)\geq (1/C)(|P_1|^{1-\eta}+|Q_0|^{1-\eta})$$
where $\delta(Q_0, P_1)$ is the distance between $Q_0$ and the tip of $G^{-1}(P_1)$, $C=C(f)\geq 1$ is a constant depending only on $f\in\mathcal{U}$, and the parameter $\eta$ relates to $\varepsilon_0$ and $\tau$ via the condition $0<\varepsilon_0\ll\eta\ll\tau<1$ (cf. (R7) in \cite{PY09}).
\end{itemize} 
Furthermore, the class $\mathcal{R}(I)$ of $I$-persistent affine-like iterates satisfying the conditions (R1) to (R7) is \emph{unique} (cf. Theorem 1 of \cite{PY09}).

\subsection{Bicritical elements and strong regularity tests} In Subsection 5.6 of \cite{PY09}, the fundamental notion of $I$-\emph{bicritical element} $(P,Q,n)\in\mathcal{R}(I)$ is introduced: roughly speaking, a bicritical element corresponds to a return of the critical region of ``almost tangency'' to itself; in other terms, a bicritical element represents an ``almost tangency of higher order''. 

These bicritical elements present a potential danger to the non-uniform hyperbolicity features of $\Lambda_g$. Thus, it is not so surprising that the \emph{several quantitative requirements} in \cite{PY09} for a candidate parameter interval $I$ to pass the strong regularity test involve a precise control of the \emph{sizes} and \emph{numbers} of bicritical elements of $\mathcal{R}(I)$ in many \emph{scales} (cf. Definition 8 and the conditions $(\textrm{SR}1)$, $(\textrm{SR}2)'$ and $(\textrm{SR}3)_s$, $(\textrm{SR}3)_u$ in \cite{PY09}).

Among the \emph{qualitative} properties satisfied by a candidate interval $I$ passing the strong regularity test we have the $\beta$-\emph{regularity} property for some adequate choice of $\beta>1$. Concretely, the property of $\beta$-regularity for $I$ requires that all bicritical elements $(P, Q, n)\in\mathcal{R}(I)$ are thin in the sense that  
$$|P|<|I|^{\beta}, \quad |Q|<|I|^{\beta}.$$
See Definition 2 in \cite{PY09}. 

Concerning the choice of $\beta>1$, it depends only on the features of the initial diffeomorphism $f\in\mathcal{U}$: more precisely, one imposes the mild condition that \begin{equation}\label{e.beta-def-0}
1<\beta< 1 + \min\{\omega_s,\omega_u\}
\end{equation} 
where $\omega_s=-\frac{\log|\lambda(p_s)|}{\log|\mu(p_s)|}$ and $\omega_u=-\frac{\log|\mu(p_u)|}{\log|\lambda(p_u)|}$ with $\mu(p_s)$, $\mu(p_u)$ denoting the unstable eigenvalues of the periodic points $p_s, p_u$ and $\lambda(p_s)$, $\lambda(p_u)$ denoting the stable eigenvalues of the periodic points $p_s, p_u$, and the important condition that 
\begin{equation}\label{e.beta-def}1<\beta<\frac{(1-\min\{d_s^0, d_u^0\})(d_s^0+d_u^0)}{\max\{d_s^0, d_u^0\}(\max\{d_s^0, d_u^0\} + d_s^0+d_u^0-1)}:=\beta^*(d_s^0,d_u^0)
\end{equation}
(cf. Remark 8 in \cite{PY09}). 

A detailed study of the strong regularity tests is performed in Section 9 of \cite{PY09}, where it is shown that most candidate intervals pass the strong regularity tests: namely, the relative measure of the union of the candidate intervals $I\subset I_0:=[\varepsilon_0, 2\varepsilon_0]$ failing the strong regularity tests is $\leq 3\varepsilon_0^{\tau^2}$ (cf. Corollary 15 of \cite{PY09}).

\subsection{Non-uniformly hyperbolic horseshoes}

Once we know that most candidate intervals are strongly regular, let us quickly review the relationship between strong regularity and non-uniform hyperbolicity. 

For this sake, we fix \emph{once and for all} a strongly regular parameter $t\in I_0=[\varepsilon_0, 2\varepsilon_0]$, say $\{t\}=\bigcap\limits_{m\in\mathbb{N}} I_m$ for some decreasing sequence $I_m$ of candidate intervals passing the strong regularity tests, and we denote by $g_t=g\in\mathcal{U}_+$ the corresponding dynamical system. 

Consider the class $\mathcal{R}:=\bigcup\limits_{m\in\mathbb{N}}\mathcal{R}(I_m)$ of certain affine-like iterates of $g$. Given a decreasing sequence of vertical strips $P_k$ associated to some affine-like iterates $(P_k, Q_k, n_k)\in\mathcal{R}$, we say that $\omega=\bigcap\limits_{k\in\mathbb{N}} P_k$ is a \emph{stable curve}. 

The set of stable curves is denoted by $\mathcal{R}^{\infty}_+$. The union of stable curves 
$$\widetilde{\mathcal{R}}^{\infty}_+:=\bigcup\limits_{\omega\in\mathcal{R}^{\infty}_+}\omega$$ 
is a lamination by $C^{1+Lip}$ (stable) curves with Lipschitz holonomy (cf. Subsection 10.5 of \cite{PY09}). 

The set $\mathcal{R}^{\infty}_+$ of stable curves has a natural partition defined in terms of the notion of \emph{prime elements} of $\mathcal{R}$. More precisely, we say that $(P,Q,n)\in\mathcal{R}$ is a prime element if it is not the simple composition of two shorter elements. Using this concept, we can write  $\mathcal{R}^{\infty}_+ := \mathcal{D}^{\infty}_+\cup\mathcal{N}_+$ where $\mathcal{N}^{\infty}_+$ is the set of stable curves contained in infinitely many prime elements and $\mathcal{D}^{\infty}_+$ is the complement of $\mathcal{N}_+$. 

The partition $\mathcal{R}^{\infty}_+ = \mathcal{D}_+\cup\mathcal{N}_+$ allows to partially define an induced dynamics $T^+$ on $\mathcal{R}^{\infty}_+$: given a stable curve $\omega\in\mathcal{D}_+$ and denoting by $(P,Q,n)\in\mathcal{R}$ the thinnest prime element containing $\omega$, one can show that $g^n(\omega)$ $T^+(\omega)$ is contained in a stable curve $\omega':=T^+(\omega)\in\mathcal{R}^{\infty}_+$. 

The map $T^+:\mathcal{D}_+\to\mathcal{R}^{\infty}_+$ is Bernoulli and uniformly expanding with countably many branches (cf. Subsection 10.5 of \cite{PY09}). Furthermore, one has a natural $1$-parameter family of transfer operators $L_d$ associated to $T^+$ whose dominant eigenvalues $\lambda_d>0$ detect the transverse Hausdorff dimension of the lamination $\widetilde{\mathcal{R}}^{\infty}_+$: more precisely, $\widetilde{\mathcal{R}}^{\infty}_+$ has Hausdorff dimension $1+d_s$ where $d_s$ is the unique value of $d$ with $\lambda_d=1$ (cf. Theorem 4 of \cite{PY09}). Also, the map $T^+$ captures most of the dynamics on $\mathcal{R}^{\infty}_+$ because most stable curves can be indefinitely iterated under $T^+$: denoting by $\mathcal{D}^{\infty}_+:=\bigcap\limits_{j\geq 0}(T^+)^{-j}(\mathcal{D}_+)$ and $\widetilde{\mathcal{D}}^{\infty}_+:=\bigcup\limits_{\omega\in \mathcal{D}^{\infty}_+}\omega$, the transverse Hausdorff dimension of $\widetilde{\mathcal{R}}^{\infty}_+ - \widetilde{\mathcal{D}}^{\infty}_+$ is $<d_s$ (cf. Proposition 57 of \cite{PY09}).

The properties described in the previous paragraph justify calling 
$$\{z\in W^s(\Lambda): g^n(z)\in\widetilde{\mathcal{R}}^{\infty}_+ \textrm{ for some } n\geq 0\}$$
the \emph{well-behaved part} of the stable set $W^s(\Lambda_g)$. 

In a similar vein, the unstable set $W^u(\Lambda_g)$ also has a well-behaved part consisting of all points whose orbit eventually enters the lamination $\widetilde{\mathcal{R}}^{\infty}_-$ consisting of unstable curves (decreasing intersections of horizontal strips associated to affine-like iterates in $\mathcal{R}$).

The nomenclature \emph{non-uniformly hyperbolic horseshoe} for $\Lambda=\Lambda_g$ is justified in \cite{PY09} by showing that the exceptional set of points outside the well-behaved part of $W^s(\Lambda)$ has the following properties: 
\begin{itemize}
\item it intersects each unstable curve in a subset of Hausdorff dimension $<d_s$ (cf. Subsection 11.5 of \cite{PY09}), and   
\item its $2$-dimensional Lebesgue measure is zero (cf. Subsection 11.6 and also Theorem 7 of \cite{PY09}).
\end{itemize}

\subsection{The stable set of a non-uniformly hyperbolic horseshoe} Following the Subsection 11.6 of \cite{PY09}, we write the stable set $W^s(\Lambda)$ as the countable union of dynamical copies of the local stable set $W^s(\Lambda, \widehat{R})\cap R$, i.e., 
$$W^s(\Lambda)=\bigcup\limits_{n\geq 0} g^{-n}(W^s(\Lambda,\widehat{R})\cap R))$$
and we split the local stable set $W^s(\Lambda,\widehat{R})\cap R$ into its well-behaved part and its \emph{exceptional part}:
$$W^s(\Lambda,\widehat{R})\cap R := \bigcup\limits_{n\geq 0} \left(W^s(\Lambda,\widehat{R})\cap R\cap g^{-n}(\widetilde{\mathcal{R}}^{\infty}_+)\right)\cup\mathcal{E}^+$$
where 
\begin{equation}\label{e.exceptional-set-def}
\mathcal{E}^+:=\{z\in W^s(\Lambda,\widehat{R})\cap R: g^n(z)\notin\widetilde{\mathcal{R}}^{\infty}_+ \textrm{ for all } n\geq 0\}
\end{equation}
Since $g$ is a diffeomorphism and the $C^{1+Lip}$-lamination $\widetilde{\mathcal{R}}^{\infty}_+$ has transverse Hausdorff dimension $0<d_s<1$ (cf. Theorem 4 of \cite{PY09}), we deduce that the Hausdorff dimension of the stable set $W^s(\Lambda)$ is:
\begin{proposition}\label{p.HD-Ws-E+} $HD(W^s(\Lambda))=\max\{1+d_s, HD(\mathcal{E}^+)\}$.
\end{proposition}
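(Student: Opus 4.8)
The plan is to combine the two standard functorial properties of Hausdorff dimension—\emph{countable stability} ($HD(\bigcup_n A_n)=\sup_n HD(A_n)$) and \emph{invariance under bi-Lipschitz bijections}—with the value $HD(\widetilde{\mathcal{R}}^{\infty}_+)=1+d_s$ supplied by Theorem 4 of \cite{PY09}. Since $g$ is a $C^{\infty}$ diffeomorphism of the compact surface $M$, every iterate $g^{\pm n}$ is Lipschitz with Lipschitz inverse, so $HD(g^{-n}(A))=HD(A)$ for every set $A$ and every $n$. Feeding countable stability and this invariance into the decomposition $W^s(\Lambda)=\bigcup_{n\geq 0}g^{-n}(W^s(\Lambda,\widehat{R})\cap R)$ immediately reduces the computation to the local stable set, namely $HD(W^s(\Lambda))=HD(W^s(\Lambda,\widehat{R})\cap R)$.

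For the upper bound I would then apply countable stability once more to the splitting
$$W^s(\Lambda,\widehat{R})\cap R=\Big(\bigcup_{n\geq 0}\big(W^s(\Lambda,\widehat{R})\cap R\cap g^{-n}(\widetilde{\mathcal{R}}^{\infty}_+)\big)\Big)\cup\mathcal{E}^+.$$
Each well-behaved piece is contained in $g^{-n}(\widetilde{\mathcal{R}}^{\infty}_+)$, so by the invariance above its dimension is at most $HD(\widetilde{\mathcal{R}}^{\infty}_+)=1+d_s$; taking the supremum over $n$ and the maximum with $HD(\mathcal{E}^+)$ yields $HD(W^s(\Lambda))\leq\max\{1+d_s,HD(\mathcal{E}^+)\}$.

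For the matching lower bound I would exhibit both competing sets as subsets of $W^s(\Lambda)$ and invoke monotonicity of $HD$. First, $\mathcal{E}^+\subseteq W^s(\Lambda,\widehat{R})\cap R\subseteq W^s(\Lambda)$ gives $HD(W^s(\Lambda))\geq HD(\mathcal{E}^+)$. Second, I would check that the lamination $\widetilde{\mathcal{R}}^{\infty}_+$ itself lies inside the local stable set: a stable curve $\omega=\bigcap_k P_k$ arises from affine-like iterates $(P_k,Q_k,n_k)\in\mathcal{R}$ with $n_k\to\infty$ and $g^m(P_k)\subseteq\widehat{R}$ for $0\leq m\leq n_k$, so every $z\in\omega$ has its entire forward orbit trapped in $\widehat{R}$, whence $z\in W^s(\Lambda,\widehat{R})\cap R$. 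Thus $\widetilde{\mathcal{R}}^{\infty}_+\subseteq W^s(\Lambda)$ and $HD(W^s(\Lambda))\geq HD(\widetilde{\mathcal{R}}^{\infty}_+)=1+d_s$. Combining the two inequalities gives the claimed equality.

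The argument is essentially bookkeeping with the two functorial properties of $HD$; the only genuinely substantive input is borrowed, namely the identity $HD(\widetilde{\mathcal{R}}^{\infty}_+)=1+d_s$ from Theorem 4 of \cite{PY09}. I therefore expect no serious obstacle. The single point deserving a moment's care is the verification that the \emph{whole} lamination $\widetilde{\mathcal{R}}^{\infty}_+$—rather than merely a lower-dimensional sub-lamination—is contained in $W^s(\Lambda,\widehat{R})\cap R$, so that intersecting with the local stable set in the $n=0$ term of the splitting does not drop the dimension below $1+d_s$ and the lower bound indeed attains $1+d_s$.
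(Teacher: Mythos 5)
Your proof is correct and is essentially the paper's own argument: the paper deduces this proposition in-line from exactly the ingredients you use, namely countable stability of Hausdorff dimension over the decompositions $W^s(\Lambda)=\bigcup_{n\geq 0}g^{-n}(W^s(\Lambda,\widehat{R})\cap R)$ and the splitting into well-behaved and exceptional parts, invariance of dimension under the diffeomorphism $g$, and the value $HD(\widetilde{\mathcal{R}}^{\infty}_+)=1+d_s$ from Theorem 4 of \cite{PY09}. Your extra verification that the full lamination $\widetilde{\mathcal{R}}^{\infty}_+$ lies in the local stable set (so the lower bound $1+d_s$ is attained) is a point the paper leaves implicit, and it checks out since points of a stable curve $\bigcap_k P_k$ have their entire forward orbit trapped in $\widehat{R}$.
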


For our purposes of studying the quantity $HD(\mathcal{E}^+)$, it is useful to recall that the exceptional set $\mathcal{E}^+$ has a natural decomposition in terms of the successive passages through the so-called  \emph{parabolic cores} of vertical strips (cf. Subsection 11.7 of \cite{PY09}). 

More precisely, given $(P,Q,n)\in\mathcal{R}$, the parabolic core $c(P)$ is the set of points of $W^s(\Lambda,\widehat{R})$ belonging to $P$ but not to any \emph{child} of $P$. Here, a child\footnote{The child terminology in page 33 of \cite{PY09} is not exactly the one given above, but it is shown in Section 6.2 of \cite{PY09} that these two definitions coincide.} $P'$ of $P$ means a vertical strip associated to some element $(P',Q',n')\in\mathcal{R}$ obtained by simple compositions with the transition maps $g_{aa'}$ of the Markov partition of the horseshoe $K_g$ or parabolic composition of $(P,Q,n)\in\mathcal{R}$ with some element of $\mathcal{R}$. 

The set of elements $(P_0, Q_0, n_0)\in\mathcal{R}$ with $c(P_0)\neq\emptyset$ is denoted by $\mathcal{C}_-$. By definition, one can write 
$$\mathcal{E}^+=\bigcup\limits_{(P_0, Q_0, n_0)\in\mathcal{C}_-}\mathcal{E}^+(P_0)$$
where 
$$\mathcal{E}^+(P_0):=\mathcal{E}^+\cap c(P_0)$$

Recall that $c(P_0)\neq\emptyset$ for some $(P_0,Q_0,n_0)\in\mathcal{R}$ implies that 
$$g^{n_0}(\mathcal{E}^+(P_0))\subset Q_0\cap L_u\cap\mathcal{E}^+$$ and 
$$G(g^{n_0}(\mathcal{E}^+(P_0))=g^{n_0+N_0}(\mathcal{E}^+(P_0))\subset L_s\cap\mathcal{E}^+.$$
This permits to decompose each $\mathcal{E}^+(P_0)$ as 
$$\mathcal{E}^+(P_0):=\bigcup\limits_{(P_1,Q_1,n_1)\in\mathcal{C}_-}\mathcal{E}^+(P_0, P_1)$$
where 
$$\mathcal{E}^+(P_0, P_1):=\{z\in\mathcal{E}^+(P_0): g^{n_0+N_0}(z)\in c(P_1)\}$$ 

In general, for each $k\in\mathbb{N}$, we can inductively define a decomposition
$$\mathcal{E}^+(P_0,\dots, P_k)=\bigcup\limits_{(P_{k+1}, Q_{k+1}, n_{k+1})\in\mathcal{C}_-} \mathcal{E}^+(P_0,\dots, P_k, P_{k+1})$$
and, consequently, 
$$\mathcal{E}^+=\bigcup\limits_{(P_0, P_1, \dots, P_k) \textrm{ admissible }}\mathcal{E}^+(P_0,\dots, P_k)$$
where $(P_0,\dots,P_k)$ is \emph{admissible} whenever $\mathcal{E}^+(P_0,\dots, P_k)\neq\emptyset$. The admissibility condition on $(P_0,\dots,P_{k+1})$ imposes severe restrictions on the elements $(P_i, Q_i, n_i)\in\mathcal{R}$: for example, $(P_0,Q_0,n_0)\in\mathcal{C}_-$, 
\begin{equation}\label{e.11.64}
\max\{|P_1|, |Q_1|\}\leq \varepsilon_0^{\beta}
\end{equation}
and, by setting $\widetilde{\beta}:=\beta(1-\eta)(1+\tau)^{-1}$,  
\begin{equation}\label{e.Lemma24}
\max\{|P_{j+1}|, |Q_{j+1}|\}\leq C|Q_j|^{\widetilde{\beta}}
\end{equation}
for all $j\geq 1$ (cf. Lemma 24 of \cite{PY09}). 
\begin{remark} Here and in what follows $C=C(f)\geq 1$ denotes an appropriate large constant depending only on $f\in\mathcal{U}$.
\end{remark}
In particular, by taking $1<\widehat{\beta}<\widetilde{\beta}$, the admissibility condition forces that 
\begin{equation}\label{e.Lemma24'}
\max\{|P_j|,|Q_j|\}\leq\varepsilon_0^{\widehat{\beta}^j}
\end{equation} 
(for $\varepsilon_0$ sufficiently small), that is, the widths of the strips $P_j$ and $Q_j$ confining the dynamics of $\mathcal{E}^+$ decay doubly exponentially fast. 

In the sequel, we will use the decomposition 
$$\mathcal{E}^+=\bigcup\limits_{(P_0, P_1, \dots, P_k) \textrm{ admissible }}\mathcal{E}^+(P_0,\dots, P_k)$$ 
in order to estimate/compute the Hausdorff dimension of $\mathcal{E}^+$. 

\begin{remark}\label{r.time-symmetric} The arguments in \cite{PY09} are \emph{time-symmetric} in the sense that all definitions and results above about $W^s(\Lambda)$ have natural counterparts for $W^u(\Lambda)$ (after exchanging the roles of past and future, vertical strips $P$ and horizontal strips $Q$, etc.). In particular, in the proof of Theorem \ref{t.MPY-A}, it suffices to study $W^s(\Lambda)$.
\end{remark}

\subsection{Some notations for Hausdorff measures} For later use, we use the following notations. Let $X$ a bounded subset of the plane. Given $0\leq d\leq 2$, and $\delta>0$, we write $m^d_{\delta}(X)$ for the infimum over open coverings $(U_i)_{i\in I}$ of $X$ with diameter $\textrm{diam}(U_i)<\delta$ of the following quantity
$$\sum\limits_{i\in I}\textrm{diam}(U_i)^d.$$
In other terms, $m_{\delta}^d(X)$ is the $d$-Hausdorff measure at scale $\delta>0$ of $X$. Note that if $X$ is a finite or countable union $\bigcup\limits_{\alpha}X_{\alpha}$, we obviously have 
$$m_{\delta}^d(X)\leq\sum\limits_{\alpha} m_{\delta}^d(X_{\alpha}).$$

In this language, the $d$-Hausdorff measure is $m^d(X)=\lim\limits_{\delta\to 0} m_{\delta}^d(X)$ and the Hausdorff dimension is 
$$HD(X):=\inf\{d\in[0, 2]: m^d(X)=0\}$$
\section{The stable set $W^s(\Lambda)$ has Hausdorff dimension $<2$}\label{s.MPY-A}

By Proposition \ref{p.HD-Ws-E+} (and Remark \ref{r.time-symmetric}), the proof of Theorem \ref{t.MPY-A} is reduced to the following statement:

\begin{theorem}\label{t.MPY-A-1} $\textrm{HD}(\mathcal{E}^+)<2$.
\end{theorem}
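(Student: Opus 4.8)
The plan is to prove that $m^d(\mathcal{E}^+)=0$ for some exponent $d<2$ close to $2$, which by the definition of Hausdorff dimension gives $HD(\mathcal{E}^+)\le d<2$. The starting point is the decomposition $\mathcal{E}^+=\bigcup_{(P_0,\dots,P_k)\ \mathrm{admissible}}\mathcal{E}^+(P_0,\dots,P_k)$, valid for every fixed $k$. By subadditivity of $m_\delta^d$,
\begin{equation*}
m_\delta^d(\mathcal{E}^+)\le\sum_{(P_0,\dots,P_k)\ \mathrm{admissible}}m_\delta^d(\mathcal{E}^+(P_0,\dots,P_k)),
\end{equation*}
so it suffices to build, for each admissible string and each large $k$, an efficient covering of $\mathcal{E}^+(P_0,\dots,P_k)$ and to show that the resulting total $d$-content tends to $0$ as $k\to\infty$.

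First I would cover the deepest image. Since the forward orbit of a point of $\mathcal{E}^+(P_0,\dots,P_k)$ visits the parabolic core $c(P_k)$, a suitable forward iterate of $\mathcal{E}^+(P_0,\dots,P_k)$ lands in the vertical strip $P_k$, whose width is doubly exponentially small, $|P_k|\le\varepsilon_0^{\widehat{\beta}^{k}}$ by \eqref{e.Lemma24'}. I cover this thin strip by $\sim|P_k|^{-1}$ squares of side $\sim|P_k|$ and then transport the covering back to $\mathcal{E}^+(P_0,\dots,P_k)$ by the inverse dynamics, one \emph{stage} at a time, re-subdividing whenever necessary so that all squares keep side $\le\delta_k:=\varepsilon_0^{\widehat{\beta}^{k}}$. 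Between two consecutive passages through parabolic cores the dynamics is a simple composition of affine-like maps satisfying the cone and distortion conditions (R2); having bounded distortion, they send, backwards, a square to a box of controlled eccentricity, contracting the almost horizontal (stable) direction and expanding the almost vertical (unstable) direction by factors comparable to the relevant widths. The delicate step is the passage through a parabolic core: there the inverse folding map $G^{-1}$ unfolds a thin box into a \emph{parabolic} ``fat strip'', which I re-cover by fresh squares of the appropriate side and transport each of them individually at the next stage, exactly as sketched in the outline.

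The heart of the argument is the bookkeeping of the two competing effects across the $k$ stages: the multiplicative growth $\rho_j$ of the number of squares caused by the foldings, against the smallness of their sizes. Here I would exploit two facts. First, choosing $d$ close to $2$ makes the $d$-content of a box with sides $a\le b$ essentially its area times the mild penalty $a^{-(2-d)}$ (cover it by $\lceil b/a\rceil$ squares of side $a$), so that the area-contraction built into the affine-like maps keeps the content under control through the hyperbolic stages. Second, the number of squares produced by each unfolding is controlled by the separation estimate (R7), $\delta(Q_j,P_{j+1})\ge(1/C)(|P_{j+1}|^{1-\eta}+|Q_j|^{1-\eta})$, which forces the relevant portion of the parabola $G^{-1}(P_{j+1})$ to stay far from its tip and hence to have bounded curvature there, together with the per-stage width relation $\max\{|P_{j+1}|,|Q_{j+1}|\}\le C|Q_j|^{\widetilde{\beta}}$ of \eqref{e.Lemma24} and the width comparison of Remark \ref{r.width-def.}. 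Combining these with the doubly exponential decay \eqref{e.Lemma24'} should show that the decisive factor $|P_k|^{d-1}$ coming from the deepest strip beats the accumulated fragmentation, provided $d<2$ is taken close enough to $2$.

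Finally I would sum over admissible strings. The admissibility constraints severely restrict the continuations $(P_0,\dots,P_k)\rightsquigarrow(P_0,\dots,P_{k+1})$, so the number of strings is itself controlled by the same width estimates; together with the per-string content bound this yields $m_{\delta_k}^d(\mathcal{E}^+)\to0$ along the scales $\delta_k=\varepsilon_0^{\widehat{\beta}^{k}}\to0$, whence $m^d(\mathcal{E}^+)=0$ and $HD(\mathcal{E}^+)<2$. I expect the main obstacle to be precisely the control of the folding-induced blow-up of the cardinality of the covering: keeping track, \emph{uniformly} over all admissible strings, of how many squares are created at each unfolding and how their sizes evolve, and verifying that the doubly exponential thinness of the strips $P_j$ — rather than any fine geometric information about the folded parabolic shapes — already suffices to defeat this growth for some $d<2$.
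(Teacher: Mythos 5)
Your overall strategy coincides with the paper's: for each admissible string, cover a deep forward image of $\mathcal{E}^+(P_0,\dots,P_k)$ by small squares, pull the covering back one stage at a time, re-subdividing into fresh squares before each passage through the folding map, and finally sum over admissible strings with $d<2$ close to $2$; the per-stage mechanics you describe (a backward affine-like iterate turns a square of side $\delta$ into a box of sides $\sim\delta|P_j|$ and $\sim\delta/|Q_j|$, re-covered by $\sim 1/(|P_j||Q_j|)$ squares) is exactly that of Lemma \ref{l.mdE+P0...Pk}. The gap is in your very first covering, and it is fatal at the last step. You cover the \emph{full} strip $P_k$ (width $|P_k|$, height of order $1$) by $\sim|P_k|^{-1}$ squares of side $\sim|P_k|$; running your own bookkeeping to the end, this yields, up to factors $C^{O(k)}$, the per-string bound $|P_0|^{d-1}\prod_{j=1}^{k}\bigl(|P_j|^{d-1}/|Q_{j-1}|\bigr)$, which contains \emph{no positive power of} $|Q_k|$. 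The paper instead covers the much smaller region $g^{n_{k-1}+N_0}(c(P_{k-1}))\cap c(P_k)$: by Proposition 62 of \cite{PY09}, the image of the core $c(P_{k-1})$ has diameter $\leq C|Q_{k-1}|^{(1-\eta)/2}$, and $c(P_k)$ lies inside a vertical substrip of $P_k$ of width $C|Q_k|^{(1-\eta)/2}|P_k|$; the initial rectangle therefore has dimensions $C|Q_k|^{(1-\eta)/2}|P_k|\times C|Q_{k-1}|^{(1-\eta)/2}$, and this is the sole source of the factor $|Q_k|^{(d-1)(1-\eta)/2}\leq|Q_k|^{d^-}$ in Lemmas \ref{l.mdE+P0...Pk} and \ref{l.mdE+P0...Pk2}. (Note also that it is this $Q$-factor, not $|P_k|^{d-1}$ as you suggest, that is decisive: $|P_k|^{d-1}$ is entirely consumed in cancelling $1/|Q_{k-1}|$ via \eqref{e.Lemma24} and the condition $\widetilde\beta(d-1)>1$.)

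That factor cannot be dispensed with, because the sum over admissible strings is genuinely infinite and only barely convergent. The number of admissible $(P_0,\dots,P_k)$ with \emph{fixed} extremities $P_0$ and $Q_k$ is bounded only by $C|Q_k|^{-C\eta}$ (page 193 of \cite{PY09}), and one must still sum over countably many $P_0$ and countably many critical $Q_k$; the paper does this using $\sum_{P_0}|P_0|^{\rho_s}\leq C$ and $\sum_{Q \textrm{ critical}}|Q|^{d^--2C\eta}\leq C$ (Subsections 11.5.10 and 11.5.9 of \cite{PY09}), which forces the extra condition $d-1>\rho_s\approx d_s^0$ on $d$ (absent from your proposal) and, crucially, consumes exactly the power $|Q_k|^{d^-}$ that your bound lacks, the leftover $|Q_k|^{C\eta}\leq\varepsilon_0^{C\eta\widehat{\beta}^k}$ being what drives $m^d_{s_k}(\mathcal{E}^+)\to 0$. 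With your bound, pairing the count $C|Q_k|^{-C\eta}$ with a summand independent of $|Q_k|$ gives a divergent sum; and the alternative of summing the tree of continuations level by level requires $\sum_{P_{j}}|P_{j}|^{d-1}/|Q_{j-1}|$ to stay bounded, i.e.\ $\widetilde\beta(d-1-\rho_s)\geq 1$, which is impossible for some horseshoes allowed by \eqref{e.PYset}: for $d_s^0=d_u^0=\sigma$ with $1/2<\sigma<3/5$, condition \eqref{e.beta-def} forces $\beta(1-\sigma)<2(1-\sigma)^2/(3\sigma-1)<1$, whereas $d<2$ and $\rho_s>\sigma$ would demand $\widetilde\beta(1-\sigma)>1$. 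So the main obstacle is not, as you anticipate, the bookkeeping of the folding-induced growth of the covering (that part of your plan is sound and matches the paper); it is that without the parabolic-core localization of Proposition 62 of \cite{PY09} the covering starts too coarse for any choice of $d<2$ to survive the summation over strings.
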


We begin the proof of Theorem \ref{t.MPY-A-1} by showing the following lemma:
\begin{lemma}\label{l.mdE+P0...Pk} Fix $1<\widehat{\beta}<\widetilde{\beta}:=\beta(1-\eta)(1+\tau)^{-1}$ and let $s_k:=C^{k+1}\varepsilon_0^{\frac{(1-\eta)}{2}\widehat{\beta}^k}$. Then, the $d$-Hausdorff measure at scale $s_k$ of 
$\mathcal{E}^+(P_0,\dots,P_k)$ satisfies 
$$m^d_{s_k}(\mathcal{E}^+(P_0,\dots, P_k))\leq C^{4k+2} |P_0|^{d-1}\cdot\prod\limits_{j=0}^{k-2}\frac{|P_{j+1}|^{d-1}}{|Q_j|}\cdot\frac{|P_k|^{d-1}}{|Q_{k-1}|^{\frac{1+\eta}{2}}}\cdot |Q_k|^{\frac{(d-1)(1-\eta)}{2}}$$
for any $1\leq d\leq 2$ and any admissible $(P_0,\dots,P_k)$. 
\end{lemma}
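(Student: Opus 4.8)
The plan is to build, for each admissible $(P_0,\dots,P_k)$, an explicit cover of $\mathcal{E}^+(P_0,\dots,P_k)$ by small squares and to track the resulting sum $\sum \textrm{diam}^d$ through the dynamics. First I would follow the forward orbit: for $z\in\mathcal{E}^+(P_0,\dots,P_k)$ the itinerary passes successively through $c(P_0),\dots,c(P_k)$, so that the composite map $\Psi=g^{n_k}\circ G\circ g^{n_{k-1}}\circ G\circ\cdots\circ G\circ g^{n_0}$ carries $\mathcal{E}^+(P_0,\dots,P_k)$ into the horizontal strip $Q_k$, which has vertical width $|Q_k|$ and horizontal length $O(1)$. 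The idea is to cover $\Psi(\mathcal{E}^+(P_0,\dots,P_k))\subset Q_k$ by squares of side equal to the \emph{parabolic scale} $\rho_k:=|Q_k|^{(1-\eta)/2}$ (chosen so that the square-root distortion of the forthcoming fold turns these into genuine squares), contributing an initial $d$-measure $\approx \rho_k^{d-1}=|Q_k|^{(d-1)(1-\eta)/2}$, and then to pull this cover back through the alternating chain of affine-like inverses $g^{-n_j}$ and folds $G^{-1}$, re-subdividing into squares after each step.

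The affine-like bookkeeping is the routine part. By the cone and distortion conditions (R2) and the width estimates of Remark \ref{r.width-def.}, each map $g^{n_j}\colon P_j\to Q_j$ expands the horizontal direction by $\sim 1/|P_j|$ and contracts the vertical direction by $\sim|Q_j|$, so pulling a cover of $Q_j$ back by $g^{-n_j}$ turns each square into a tall thin rectangle of dimensions $\sim\rho|P_j|\times\rho/|Q_j|$. Re-subdividing this rectangle into squares of side $\rho|P_j|$ at the natural width $|Q_j|$ of the strip multiplies the running $d$-measure by exactly $|P_j|^{d-1}/|Q_j|$, independently of the incoming scale $\rho$. Telescoping these factors over the ``bulk'' returns $j=0,\dots,k-2$ produces the product $\prod_{j=0}^{k-2}|P_{j+1}|^{d-1}/|Q_j|$, while the outermost affine pullback $g^{-n_0}\colon Q_0\to P_0$ (with no further fold after it) contributes the factor $|P_0|^{d-1}$.

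The folds $G^{-1}$ are where the real work lies, and the exceptional factors record their effect. Near the tip the quadratic tangency makes $G^{-1}$ behave like a square root in the transverse direction, so a horizontal extent $\ell$ and a vertical extent $w$ are related by $w\sim\ell^2$; this is why the initial cover of $Q_k$ must be taken at scale $\rho_k=|Q_k|^{(1-\eta)/2}$ rather than at the strip width $|Q_k|$, and why the innermost fold emanating from $c(P_k)$ only resolves the preceding strip $Q_{k-1}$ down to width $|Q_{k-1}|^{(1+\eta)/2}$ (note $1-(1-\eta)/2=(1+\eta)/2$), yielding the modified factor $|P_k|^{d-1}/|Q_{k-1}|^{(1+\eta)/2}$. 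The admissibility bound (R7), namely $\delta(Q_j,P_{j+1})\geq (1/C)(|P_{j+1}|^{1-\eta}+|Q_j|^{1-\eta})$, is exactly what keeps the itinerary far enough from the tip that the more interior folds $G^{-1}$ (for $j\le k-2$) have bounded transverse distortion and can be absorbed into the affine factors up to a constant. Accumulating the bounded-distortion constants over the $k+1$ affine and $k$ fold steps gives the prefactor $C^{4k+2}$, and, since the affine contractions and the controlled folds only shrink the binding horizontal diameter, all final covering squares in $P_0$ have diameter at most $C^{k+1}\rho_k=s_k$, so the resulting estimate is genuinely a bound on $m^d_{s_k}$.

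I expect the main obstacle to be precisely the control of the folds: the outline's warning that folded strips acquire ``parabolic shapes'' means that after each passage through a parabolic core the pulled-back rectangles are not affine images of squares but curved regions, and re-treating them as ``fat strips'' to be re-subdivided into new squares causes the number of covering elements to proliferate. The crux is to show that this proliferation stays mild, i.e., that re-squaring at the width $|Q_j|$ (or $|Q_{k-1}|^{(1+\eta)/2}$ at the finest scale) gives exactly the stated multiplicative factors and no worse; here the doubly exponential decay $\max\{|P_j|,|Q_j|\}\leq\varepsilon_0^{\widehat\beta^{\,j}}$ from \eqref{e.Lemma24'} and the square-root geometry governed by (R7) are what guarantee that the total $d$-measure of the cover collapses to the claimed product despite the growth in cardinality.
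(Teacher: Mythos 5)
Your overall scheme---push $\mathcal{E}^+(P_0,\dots,P_k)$ forward to the deepest level, cover by squares at the parabolic scale, then pull the cover back through the alternating chain of affine-like maps and folds, re-subdividing into squares after each step---is the same as the paper's, and your bulk bookkeeping (each affine pullback plus re-subdivision multiplies the running $d$-measure by $|P_j|^{d-1}/|Q_j|$) is exactly the paper's inductive step. But there is a genuine gap at the deepest level: you never localize the set you are covering. The paper's starting cover is not a cover of $Q_k$; it is a cover of a rectangle of dimensions $C|Q_k|^{(1-\eta)/2}|P_k|\times C|Q_{k-1}|^{(1-\eta)/2}$ containing $g^{n_0+N_0+\cdots+n_{k-1}+N_0}(\mathcal{E}^+(P_0,\dots,P_k))$, and \emph{both} extents come from Proposition 62 of \cite{PY09}: the image $g^{n_k}(c(P_k))$ has diameter $\leq C|Q_k|^{(1-\eta)/2}$ (whence, pulling back by $g^{-n_k}$, the core $c(P_k)$ lies in a vertical strip of width $C|Q_k|^{(1-\eta)/2}|P_k|$), and the image $g^{n_{k-1}+N_0}(c(P_{k-1}))$ of the previous core lies in a horizontal strip of height $C|Q_{k-1}|^{(1-\eta)/2}$. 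This diameter bound on images of parabolic cores is the one place where the square-root geometry and (R7) genuinely enter, and it is precisely what your proposal omits: you use the parabolic scale $\rho_k=|Q_k|^{(1-\eta)/2}$ only to fix the \emph{side} of your squares, while covering the \emph{full} horizontal extent $O(1)$ of $Q_k$ with $\sim\rho_k^{-1}$ squares.

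This loss is fatal, not cosmetic. Your claimed innermost multiplier $|P_k|^{d-1}/|Q_{k-1}|^{(1+\eta)/2}$ does not follow from the construction you describe: pulling your $Q_k$-cover back by $g^{-n_k}$ and re-subdividing (even granting the vertical localization by $|Q_{k-1}|^{(1-\eta)/2}$, for which you would again need Proposition 62) multiplies the $d$-measure by $|Q_{k-1}|^{(1-\eta)/2}|P_k|^{d-1}/\rho_k$, which differs from your claimed factor by $|Q_{k-1}|/\rho_k$, a ratio that is uncontrolled in either direction. Executed faithfully, your cover carries an extra factor $\rho_k^{-1}$ throughout, i.e.\ it proves the lemma with $|Q_k|^{(d-2)(1-\eta)/2}$ in place of $|Q_k|^{(d-1)(1-\eta)/2}$; since $d<2$ and $|Q_k|<1$ this factor exceeds $1$, and the summation over critical $Q_k$ in the proof of Theorem \ref{t.MPY-A-1} would then diverge, so the weakened estimate cannot be used. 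Relatedly, your concern about the distortion of the interior folds, and your use of (R7) to control it, inverts the logic of the argument: $G=g^{N_0}|_{L_u}$ is a fixed diffeomorphism with uniformly bounded derivatives, so every fold costs only a constant (this is how the paper treats all of them); what (R7)/admissibility actually buys, via Proposition 62 of \cite{PY09}, is the localization of the cores near the tips of the tongues---exactly the missing ingredient above.
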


\begin{proof} Let $(P_0,\dots, P_k)$ be admissible. By definition, 
$$g^{n_0+N_0+\dots+n_{k-1}+N_0}(\mathcal{E}^+(P_0,\dots,P_k))\subset g^{n_{k-1}+N_0}(c(P_{k-1}))\cap c(P_k).$$ 

Since $c(P_j)\neq\emptyset$, i.e., $(P_j,Q_j,n_j)\in\mathcal{C}_-$, for all $j=0, \dots, k$, it follows from Proposition 62 of \cite{PY09} that:
\begin{itemize}
\item[(a)] $g^{n_{k-1}+N_0}(c(P_{k-1}))$ is a subregion of $g^{N_0}(Q_{k-1}\cap L_u)$ with diameter $\leq C|Q_{k-1}|^{\frac{(1-\eta)}{2}}$, and, a fortiori, $g^{n_{k-1}+N_0}(c(P_{k-1}))$ is contained in a horizontal strip of width $C|Q_{k-1}|^{\frac{(1-\eta)}{2}}$
\item[(b)] $g^{n_k}(c(P_k))$ is a subregion of $Q_k\cap L_u$ of diameter $\leq C|Q_k|^{\frac{(1-\eta)}{2}}$. 
\end{itemize}
On the other hand, the affine-like iterate $g^{n_k}|_{P_k}$ expands the horizontal direction by a factor $\sim 1/|P_k|$ and contracts the vertical direction by a factor $\sim |Q_k|$. It follows from item (b) above that $c(P_k)$ is contained in a vertical strip of width $C|Q_k|^{\frac{(1-\eta)}{2}}|P_k|$. 

\begin{figure}[htb!]
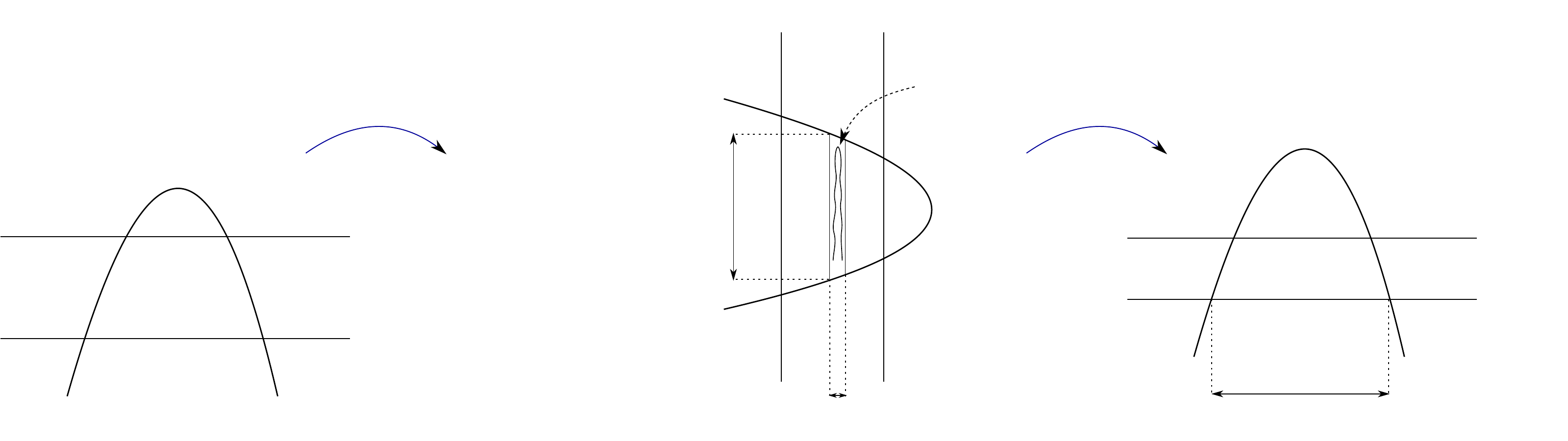
\end{figure}

In particular, we deduce that $g^{n_0+N_0+\dots+n_{k-1}+N_0}(\mathcal{E}^+(P_0,\dots,P_k))$ is contained in a rectangular region of dimensions $C|Q_k|^{\frac{(1-\eta)}{2}}|P_k|$ (in the horizontal direction) and $C|Q_{k-1}|^{\frac{(1-\eta)}{2}}$ (in the vertical direction). By partitioning this rectangular region into $\widetilde{N}_k:= C^2 \frac{|Q_{k-1}|^{(1-\eta)/2}}{|Q_k|^{(1-\eta)/2} |P_k|}$ squares of sides of lengths $C|Q_k|^{\frac{(1-\eta)}{2}}|P_k|$, we obtain a covering $\mathcal{O}_k$ of 
$$g^{n_0+N_0+\dots+n_{k-1}+N_0}(\mathcal{E}^+(P_0,\dots,P_k))$$ by $\widetilde{N}_k$ squares whose sides have length $C|Q_k|^{\frac{(1-\eta)}{2}}|P_k|$. 

From the covering $\mathcal{O}_k$, we produce a covering of $\mathcal{E}^+(P_0,\dots,P_k)$ by analyzing \emph{individually} the evolution of the elements of $\mathcal{O}_k$ under the backward iterates of $g$ using the following \emph{inductive} procedure. 

At the $i$-th step, we have $M_{i}:=\prod\limits_{j=k-i}^k \widetilde{N}_j$ squares of dimensions $\delta_i:=C^{i+1}|Q_k|^{\frac{(1-\eta)}{2}}\prod\limits_{j=k-i}^k |P_j|$ forming a covering 
$\mathcal{O}_{k-i}$ of 
$$g^{n_0+N_0+\dots+n_{k-i-1}+N_0}(\mathcal{E}^+(P_0,\dots,P_k))$$ 
Since the affine-like iterate $g^{n_{k-i-1}}|_{P_{k-i-1}}$ expands the horizontal direction by a factor $\sim 1/|P_{k-i-1}|$ and contracts the vertical direction by a factor $\sim |Q_{k-i-1}|$ (and the folding map $G=g^{N_0}$ is fixed), we see that the inverse of $g^{n_{k-i-1}+N_0}|_{P_{k-i-1}}$ sends each element of $\mathcal{O}_{k-i}$ into a rectangular region of dimensions $\delta_{i+1}:=C\cdot \delta_i\cdot |P_{k-i-1}|$ (in the horizontal direction) and $C\cdot \delta_i/|Q_{k-i-1}|$ (in the vertical direction). By partitioning each of these rectangular regions into $\widetilde{N}_{k-i-1}:=C^2/(|P_{k-i-1}|\cdot |Q_{k-i-1}|)$ squares with sides of length $\delta_{i+1}$, we get a covering $\mathcal{O}_{k-i-1}$ of 
$$g^{n_0+N_0+\dots+n_{k-i-2}+N_0}(\mathcal{E}^+(P_0,\dots,P_k))$$
by $M_{i+1}:=\widetilde{N}_{k-i-1}\cdot M_{i}$ squares whose sides have length $\delta_{i+1}$.

In the end of the $(k-1)$-th step of this procedure, we obtain a covering $\mathcal{O}_0$ of $\mathcal{E}^+(P_0,\dots,P_k)$ by $M_k$ squares of sides of length $\delta_k$. 

Observe that, by \eqref{e.Lemma24'}, one has $\delta_k\leq C^{k+1}|Q_k|^{\frac{(1-\eta)}{2}}\leq C^{k+1}\varepsilon_0^{\frac{(1-\eta)}{2}\widehat{\beta}^k}:=s_k$. In particular, we can use the covering $\mathcal{O}_0$ of $\mathcal{E}^+(P_0,\dots,P_k)$ to get the estimate
\begin{eqnarray*}
& &m_{s_k}^d(\mathcal{E}^+(P_0,\dots,P_k)) \leq M_k\delta_k^{d}=C^{2(k-1)}\prod\limits_{j=0}^{k-1}\frac{1}{|P_j|\cdot |Q_j|}\widetilde{N}_k\delta_k^d \\ 
& &\leq C^{2k+(k+1)d} \prod\limits_{j=0}^{k-1}\frac{1}{|P_j|\cdot |Q_j|}\cdot \frac{|Q_{k-1}|^{(1-\eta)/2}}{|Q_k|^{(1-\eta)/2} |P_k|}\cdot |Q_k|^{d(1-\eta)/2}\prod\limits_{j=0}^k |P_j|^d \\
& &\leq C^{4k+2} \prod\limits_{j=0}^{k-1}\frac{|P_j|^{d-1}}{|Q_j|} |P_k|^{d-1} |Q_{k-1}|^{(1-\eta)/2}|Q_k|^{(d-1)(1-\eta)/2} \\ 
& & = C^{4k+2} |P_0|^{d-1}\cdot\prod\limits_{j=0}^{k-2}\frac{|P_{j+1}|^{d-1}}{|Q_j|}\cdot\frac{|P_k|^{d-1}}{|Q_{k-1}|^{(1+\eta)/2}}\cdot |Q_k|^{(d-1)(1-\eta)/2}
\end{eqnarray*}

This proves the lemma.
\end{proof} 

\begin{remark}\label{r.MPY-A-idea} The basic idea to prove the previous lemma is the following. We start by covering an adequate forward iterate of $\mathcal{E}^+(P_0,\dots,P_k)$ with little squares and we bring back this covering under the dynamics. The affine-like iterates will stretch these squares into vertical strips and, \emph{before} the folding map $G=g^{N_0}$ ``bend'' these strips (making their geometry very intricate), we subdivide each strip into smaller squares in order to keep a \emph{qualitative} control of the covering after the folding map acts. 

Of course, we lose control of the \emph{fine} geometrical structure of $\mathcal{E}^+(P_0,\dots,P_k)$ in this argument and this is why one can not hope to apply Lemma \ref{l.mdE+P0...Pk} to get the expected Hausdorff dimension for $\mathcal{E}^+$ (see also Remark \ref{r.expected-dimension-MPY-A} below).  
\end{remark}

The next lemma says that the estimate in Lemma \ref{l.mdE+P0...Pk} is particularly useful when the parameter $1\leq d\leq 2$ is chosen close to $2$:

\begin{lemma}\label{l.mdE+P0...Pk2} Assume that $7/5<d<2$ and $\widetilde{\beta}(d-1)>1$ (where $\widetilde{\beta}=\beta(1-\eta)(1+\tau)^{-1}$ and $\beta$ satisfies \eqref{e.beta-def-0} and \eqref{e.beta-def}). Then, in the setting of Lemma \ref{l.mdE+P0...Pk}, one has 
$$m^d_{s_k}(\mathcal{E}^+(P_0,\dots,P_k))\leq C^{5k+2}|P_0|^{d-1}|Q_k|^{d^-}$$ 
where $d^->d_s+d_u-1$ is close to $d_s+d_u-1$ (and $d_s$, $d_u$ are the transverse Hausdorff dimensions of the well-behaved parts of $W^s(\Lambda)$ 
and $W^u(\Lambda)$).
\end{lemma}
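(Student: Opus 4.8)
The plan is to peel off the prefactor $C^{4k+2}|P_0|^{d-1}$ supplied by Lemma \ref{l.mdE+P0...Pk} and reduce everything to the single product estimate
$$\prod_{j=0}^{k-2}\frac{|P_{j+1}|^{d-1}}{|Q_j|}\cdot\frac{|P_k|^{d-1}}{|Q_{k-1}|^{(1+\eta)/2}}\cdot|Q_k|^{(d-1)(1-\eta)/2}\leq C^{k}|Q_k|^{d^-}.$$
Multiplying this back by $C^{4k+2}|P_0|^{d-1}$ then gives the stated bound with constant $C^{5k+2}$, the extra factor $C^{k}$ being absorbed by producing at most one constant $C^{d-1}\leq C$ per telescoping step (recall $d-1<1$ since $d<2$).

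First I would tame the bulk product $\prod_{j=0}^{k-2}|P_{j+1}|^{d-1}/|Q_j|$ factor by factor. The admissibility relation \eqref{e.Lemma24} gives $|P_{j+1}|\leq C|Q_j|^{\widetilde{\beta}}$, so each factor is at most $C^{d-1}|Q_j|^{\widetilde{\beta}(d-1)-1}$. The hypothesis $\widetilde{\beta}(d-1)>1$ is precisely what makes the exponent $\widetilde{\beta}(d-1)-1$ strictly positive; since every $|Q_j|<1$, each factor is then bounded by $C$ and contributes only additional smallness. Applying the same computation to the boundary factor $|P_k|^{d-1}/|Q_{k-1}|^{(1+\eta)/2}$ through $|P_k|\leq C|Q_{k-1}|^{\widetilde{\beta}}$ produces $C^{d-1}|Q_{k-1}|^{\widetilde{\beta}(d-1)-(1+\eta)/2}$, whose exponent is again positive because $\widetilde{\beta}(d-1)>1>(1+\eta)/2$ for $\eta$ small.

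Next I would convert the surviving positive powers of $|Q_0|,\dots,|Q_{k-1}|$ together with the explicit factor $|Q_k|^{(d-1)(1-\eta)/2}$ into one clean power $|Q_k|^{d^-}$. This is the step where the transverse dimensions enter: using the width estimates of \cite{PY09} that relate $|P_j|$, $|Q_j|$ and the stable and unstable dimensions $d_s$, $d_u$, together with the double-exponential decay \eqref{e.Lemma24'} (which makes $|Q_k|$ the overwhelmingly dominant small quantity while $|Q_{k-1}|,\dots,|Q_0|$ stay comparatively harmless), one checks that the net exponent of $|Q_k|$ may be taken to be any $d^-$ slightly larger than $d_s+d_u-1$. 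The role of the restriction $7/5<d<2$ is to keep every intermediate exponent of the correct (positive) sign while allowing $d$ close enough to $2$ that $d^-$ does not have to overshoot $d_s+d_u-1$.

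The main obstacle will be exactly this last conversion: pinning down the exponent of $|Q_k|$ to be close to $d_s+d_u-1$, which forces one to invoke the fine relations between the widths of affine-like iterates and the transverse Hausdorff dimensions established in \cite{PY09}, and to verify that the two hypotheses $7/5<d<2$ and $\widetilde{\beta}(d-1)>1$ jointly keep all exponents positive throughout the telescoping. Once this is in place, collecting the per-step constants into $C^{5k+2}$ and reattaching the factor $|P_0|^{d-1}$ completes the proof of the lemma.
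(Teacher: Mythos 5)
Your first step---bounding each factor of the bulk product by a constant via \eqref{e.Lemma24} and the hypothesis $\widetilde{\beta}(d-1)>1$, and the boundary factor via $\widetilde{\beta}(d-1)>1>(1+\eta)/2$---is exactly what the paper does, but with one overlooked point: \eqref{e.Lemma24} is stated only for $j\geq 1$, so it cannot be applied to the $j=0$ factor $|P_1|^{d-1}/|Q_0|$ of the product. For that factor the only available width bound is \eqref{e.11.64}, i.e.\ $\max\{|P_1|,|Q_1|\}\leq\varepsilon_0^{\beta}$, which gives no control of $|Q_0|$ from below; the paper disposes of this special case separately, by invoking \eqref{e.11.64} together with the argument at page 193 of \cite{PY09} (right after (11.73)).

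The genuine gap is in your second step, which you yourself flag as ``the main obstacle''. You propose to obtain the exponent $d^-$ by invoking ``fine relations between the widths of affine-like iterates and the transverse Hausdorff dimensions'' and by converting surviving positive powers of $|Q_0|,\dots,|Q_{k-1}|$ into a power of $|Q_k|$. No such machinery exists in, or is needed for, this lemma, and the conversion you describe cannot work as stated: by \eqref{e.Lemma24} the widths $|Q_j|$, $j<k$, are \emph{much larger} than $|Q_k|$, so their positive powers cannot be traded for positive powers of $|Q_k|$---they are simply discarded using $|Q_j|\leq 1$. The actual final step is purely numerical: the slight-thickness condition \eqref{e.PYset} forces $d_s^0+d_u^0<6/5$, hence $d_s+d_u-1<1/5$, so one may fix $d_s+d_u-1<d^-<1/5$; on the other hand, $d>7/5$ and $0<\eta\ll1$ give $(d-1)(1-\eta)/2>1/5>d^-$, and since $|Q_k|\leq 1$ this yields $|Q_k|^{(d-1)(1-\eta)/2}\leq|Q_k|^{d^-}$, which is the stated bound. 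This comparison is also the true role of the hypothesis $7/5<d<2$ (it pushes $(d-1)(1-\eta)/2$ above the pivot value $1/5$), not the sign bookkeeping you describe. The dimensions $d_s,d_u$ enter the lemma only through the requirement that $d^-$ exceed $d_s+d_u-1$, which is what the later summation over critical strips (Subsection 11.5.9 of \cite{PY09}) demands in the proof of the theorem; they play no role inside the estimate itself. As written, your proposal defers the decisive step to tools that do not apply, so it does not constitute a proof.
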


\begin{proof} Since $\widetilde{\beta}(d-1)>1>(1+\eta)/2$ (as $0<\eta\ll 1$), we can combine\footnote{Together with \eqref{e.11.64} and the argument at page 193 of \cite{PY09} (right after (11.73)) to deal with the special case $|P_1|^{d-1}/|Q_0|$.} \eqref{e.Lemma24} with Lemma \ref{l.mdE+P0...Pk} to deduce that 
$$m^d_{s_k}(\mathcal{E}^+(P_0,\dots,P_k))\leq C^{5k+1}|P_0|^{d-1}|Q_k|^{\frac{(d-1)(1-\eta)}{2}}$$

Fix $d_s+d_u-1<d^-<1/5$ close to $d_s+d_u-1$: this choice is possible because the dimension condition \eqref{e.PYset} ensures that $d_s+d_u<6/5$. In particular,  $(d-1)(1-\eta)/2>1/5>d^-$ (as $d>7/5$ and $0<\eta\ll1$) and hence 
$$m^d_{s_k}(\mathcal{E}^+(P_0,\dots,P_k))\leq C^{5k+1}|P_0|^{d-1}|Q_k|^{d^-}$$
This shows the lemma.
\end{proof}

At this point, we are ready to complete the proof of Theorem \ref{t.MPY-A-1} (and, a fortiori, Theorem \ref{t.MPY-A}).

\begin{proof}[Proof of Theorem \ref{t.MPY-A-1}] Fix $7/5<d<2$ close to $2$ so that $\widetilde{\beta}(d-1)>1$ and $d>1+d_s^0(1+C\varepsilon_0^{\tau})$. 

For each $k\in\mathbb{N}$, consider the decomposition
$$\mathcal{E}^+=\bigcup\limits_{(P_0,\dots,P_k) \textrm{ admissible }}\mathcal{E}^+(P_0,\dots,P_k)$$

Since $7/5<d<2$ and $\widetilde{\beta}(d-1)>1$, we can apply Lemma \ref{l.mdE+P0...Pk2} to deduce that 
$$m^d_{s_k}(\mathcal{E}^+)\leq C^{5k+2}\sum\limits_{(P_0,\dots,P_k) \textrm{ admissible }}|P_0|^{d-1}|Q_k|^{d^-}$$
where $d^->d_s^0+d_u^0-1$ is close to $d_s^0+d_u^0-1$.

As it was shown in page 193 of \cite{PY09} (after (11.77)), the quantity of admissible sequences $(P_0,\dots, Q_0)$ with fixed extremities $P_0$ and $Q_k$ is $\leq C|Q_k|^{-C\eta}$. Since the admissibility condition implies that $Q_0, \dots, Q_k$ are critical (in the sense of the definition at page 37 of \cite{PY09}), we get from the previous estimate that 
$$m^d_{s_k}(\mathcal{E}^+)\leq C^{5k+3}\sum\limits_{\substack{P_0 \textrm{ with } Q_0 \textrm{critical}, \\ Q_k \textrm{ critical}}}|P_0|^{d-1}|Q_k|^{d^- - C\eta}$$
On the other hand, we know from Subsection 11.5.10 of \cite{PY09} that 
$$\sum\limits_{P \textrm{ with } Q \textrm{ critical }}|P|^{\rho_s}\leq C<\infty$$
where $d_s^0+C\varepsilon_0<\rho_s<d_s^0(1+C\varepsilon_0^{\tau})$ is a parameter defined at pages 135 and 138 of \cite{PY09}, and from Subsection 11.5.9 of \cite{PY09} that 
$$\sum_{Q \textrm{ critical }}|Q|^{d^- - 2C\eta}\leq C<\infty$$

Since $d-1>d_s^0(1+C\varepsilon_0^{\tau})>\rho_s$ and $|Q_k|\leq\varepsilon_0^{\widehat{\beta}^k}$ (cf. \eqref{e.Lemma24'}), we conclude that 
$$m^d_{s_k}(\mathcal{E}^+)\leq C^{5k+5}\varepsilon_0^{C\eta\widehat{\beta}^k}$$
Because $\widehat{\beta}>1$, by letting $k\to\infty$, we see that the Hausdorff measures 
$$m^d_{s_k}(\mathcal{E}^+)\leq C^{5k+5}\varepsilon_0^{C\eta\widehat{\beta}^k}\to 0$$ 
along a sequence of scales $s_k:=C^{k+1}\varepsilon_0^{\frac{(1-\eta)}{2}\widehat{\beta}^k}\to 0$. 

Therefore, $HD(\mathcal{E}^+)\leq d<2$. This proves Theorem \ref{t.MPY-A-1} (and also Theorem \ref{t.MPY-A} in view of Proposition \ref{p.HD-Ws-E+}).
\end{proof}

\begin{remark}\label{r.expected-dimension-MPY-A} Still along the lines of Remark \ref{r.MPY-A-idea} above, let us observe that the argument used in the proof of Theorem \ref{t.MPY-A-1} does \emph{not} allow us to show that $\textrm{HD}(\mathcal{E}^+)<1+d_s$ when $d_s^0+d_u^0>1$. Indeed, among the several conditions imposed on the parameter $d$ with $HD(\mathcal{E}^+)\leq d<2$, we required that $\widetilde{\beta}(d-1)=\beta(1-\eta)(1+\tau)^{-1}(d-1)>1$ and $d-1>d_s^0(1+o(1))$. Because $d_s-d_s^0=o(1)$ and $0<\eta\ll\tau\ll1$, if we want to take $d<1+d_s$, then the inequality $\beta d_s^0>1$ must hold. However, it is \emph{never} the case that $\beta d_s^0>1$ when $d_s^0+d_u^0>1$: indeed, by \eqref{e.beta-def}, one has $\beta \max\{d_s^0,d_u^0\}<\frac{(1-\min\{d_s^0,d_u^0\})(d_s^0+d_u^0)}{\max\{d_s^0,d_u^0\}+d_s^0+d_u^0-1} = \frac{(d_s^0+d_u^0)-\min\{d_s^0,d_0^u\}(d_s^0+d_u^0)}{(d_s^0+d_u^0)-(1-\max\{d_s^0,d_u^0\})}<1$. 
\end{remark}

\end{document}